\newcounter{dummy} \numberwithin{dummy}{section}
\newtheorem{theorem}[dummy]{Theorem}
\newtheorem{corollary}[dummy]{Corollary}
\newtheorem{lemma}[dummy]{Lemma}
\newtheorem{definition}[dummy]{Definition}
\newtheorem{proposition}[dummy]{Proposition}
\theoremstyle{remark}
\newtheorem{remark}[dummy]{Remark}
\newtheorem{example}[dummy]{Example}
\newcommand{\calA}{\mathcal{A}}
\newcommand{\scrE}{\mathscr{E}}
\DeclareMathOperator{\Ann}{Ann}
\DeclareMathOperator{\rank}{rank}
\DeclareMathOperator{\tr}{tr}
\DeclareMathOperator{\Ad}{Ad}
\DeclareMathOperator{\ad}{ad}
\DeclareMathOperator{\dv}{div}
\newcommand{\ve}{\varepsilon}
\DeclareMathOperator{\bnabla}{\boldsymbol \nabla}
\numberwithin{equation}{section}
\title[Harmonic maps into sub-Riemannian Lie groups]{Harmonic maps into sub-Riemannian Lie groups}
\author[E. Grong and I. Markina]{Erlend Grong and Irina Markina}
\address{Department of Mathematics, University of Bergen, P.O. Box 7803, 5020 Bergen, Norway}
\email{erlend.grong@uib.no}
\email{irina.markina@uib.no}
\subjclass[2020]{58E20, 53C17}
\keywords{Sub-Riemannian manifolds, horizontal maps, harmonic maps, Darboux derivative}
\thanks{The first author was by Trond Mohn Foundation-Grant TMS2021STG02 (GeoProCo). The work of both authors was partially supported by the project Pure Mathematics in Norway, funded by the Trond Mohn Foundation and Troms{\o} Research Foundation.}
\begin{document}

\begin{abstract}
We define harmonic maps between sub-Riemannian manifolds by generalizing known definitions for Riemannian manifolds. We establish conditions for when a horizontal map into a Lie group with a left-invariant metric structure is a harmonic map. We show that sub-Riemannian harmonic maps can be abnormal or normal, just as sub-Riemannian geodesics. We illustrate our study by presenting the equations for harmonic maps into the Heisenberg group.
\end{abstract}

\maketitle

\section{Introduction}

A harmonic map between Riemannian manifolds $(M,g)$, $\dim(M)=m$, and $(N,h)$, $\dim(N)=n$, are smooth maps giving the minimum to the energy functional 
\begin{equation}\label{eq:energy functional}
\scrE(f)=\int_M e(f) d\mu, \quad e(f)(x) = \sum_{j=1}^n\sum_{i=1}^m h\big( w_j,df(v_i)\big)^2,
\quad f\colon M\to N,
\end{equation}
where $d\mu$ is the Riemannian volume density on $M$, $\{v_i\}_{i=1}^m$ is an orthonormal basis in $T_x M$, and $\{w_j\}_{j=1}^n$ is an orthonormal basis in $T_{f(x)} N$. 
Particular examples are maps $f\colon [a,b]\to N$, describing the Riemannian geodesics in $N$ and harmonic functions $f\colon M \to \mathbb{R}$. Other examples are minimal surfaces. For instance, a minimal surface in $\mathbb{R}^3$ can be seen as a harmonic map $f\colon [a,b]\times [a,b]\to \mathbb R^3$; see, e.g.,~\cite{MR1632}, \cite{MR0036317}, \cite{MR852409} or more recent survey for minimal submanifolds~\cite{MR576752}. The Euler-Lagrange equations of \eqref{eq:energy functional} correspond to the solution of $\tau(f) = 0$, where $\tau(f) = \tr_g \bnabla_{\times} df(\times)$ denotes the tension field of $f$, defined by using an induced connection on $T^*M \otimes f^* TN$ from the Levi-Civita connections on respectively $M$ and $N$. The celebrated result of~\cite{MR164306}
states that any smooth map $f\in C^{\infty}(M, N)$ from a compact Riemannian manifold $M$ to a manifold $N$ of non-positive scalar curvature can be deformed to a harmonic map.

A generalization of this terminology has been suggested for sub-Riemannian manifolds. A sub-Riemannian manifold is a triplet $(M,D,g)$ consisting of a smooth, connected manifold $M$, a subbundle $D$ of the tangent bundle $TM$, and a sub-Riemannian metric $g$ defined only on vectors in $D$. We assume that $D$ is bracket-generating, meaning that sections of $D$ and a sufficient number of their Lie brackets span $T_xM$ at each point $x\in M$. Studies of harmonic maps $f\colon M\to N$ from a sub-Riemannian $(M,D,g)$ into a Riemannian manifold $(N,h)$ of non-positive curvature was made, for instance, in~\cite{MR1871387,MR4236537,MR3223552,MR1433120}. Here, the energy functional \eqref{eq:energy functional} is modified by letting $v_1, \dots, v_m$ be an orthonormal basis of $D_x$, and the corresponding equation $\tau(f)=0$ turns to be of a hypoelliptic type. The existence and regularity of the solution to $\tau(f)=0$ was established in~\cite{MR4236537} under some convexity condition on $N$, and uniqueness has been shown in~\cite{MR1728089}.

In the present paper we consider harmonic maps allowing the target space to be a sub-Riemannian manifold. Already the study of curves in sub-Riemannian manifolds shows that it is not sufficient to deal exclusively with the Euler-Lagrange equation when it comes to minimizers of~\eqref{eq:energy functional}. More precisely, there are examples of curves that are energy minimizers, and hence the length minimizers, which are not solutions to the Euler-Lagrange equation. Such curves are necessarily singular points in the space of curves of finite sub-Riemannian length, also called horizontal curves, fixing two given points. Minimizers that are solutions of the Euler-Lagrange equation are called normal, and they are smooth~\cite{MR1078163,MR862049}. There are several open questions related to the regularity of minimizers which are singular curves~\cite{MR3932956}; see also~\cite{MR3971262}.  To simplify the exposition we choose the target sub-Riemannian manifold $(N,E,h)$ to be a Lie group with a left-invariant sub-Riemannian structure $(E,h)$; see also~\cite{MR4236551}, where the target space is a Carnot group. The restriction of the target space to a Lie group allows one to avoid some of the complications of $L^2$ and Sobolev maps between general manifolds; see, e.g., \cite[Section~4]{SU83}. Furthermore, applying the Maurer-Cartan form on a Lie group simplifies calculations and prevents the need of to choose an explicit connection for the target manifold as well.
The map $f\colon M\to N$ is required to be horizontal, that is $df(D)\subset E$. We consider the harmonic maps to be analogous of ``normal" and singular geodesics, based on the study of the maps that are regular or singular points of an analogue of the end-point-map. We finally produce equations for both types of horizontal maps: the singular (or abnormal) maps and the normal, latter being solutions of the Euler-Lagrange equation. We will not address conditions for existence or non-existence of such harmonic maps, rather leaving such questions for future research.

We emphasize that we consider a sub-Riemannian analogue of \eqref{eq:energy functional} which is only defined for horizontal maps and a map $f$ is harmonic if it is a critical value under \emph{horizontal variations}. See \eqref{eq:SREnergy} for the definition of the sub-Riemannian energy functional. Such an approach can be considered as the limiting case when the length of vectors outside of $E$ in $TN$ approach infinity. This is in contrast to work in~\cite[Proposition 5.1]{MR2480662} on CR manifolds, which uses an orthogonal projection to define an energy functional for all maps, and where maps are considered harmonic if it is a critical value relative to \emph{all variations}. The latter can be considered as a limiting case where the length of vectors orthogonal to $E$ in $TN$ approach zero. However, we note that if a map $f$ is horizontal \emph{and} harmonic in the sense of the definition in~\cite{MR2480662}, then $f$ will also be harmonic according to our definition, as being critical under all variations implies that $f$ is also critical with respect to horizontal variations.

The structure of the paper is as follows. In Section~\ref{sec:SRGeometry} we introduce sub-Riemannian manifolds, sub-Riemannian measure spaces, and connections compatible with such structures. In Section~\ref{sec:Horizontal}, we define horizontal maps from a compact sub-Riemannian measure space into a Lie group with a left-invariant sub-Riemannian structure, and we show the Hilbert manifold structure of the space of these maps. 
For the rest of the paper, we use the the convention that $M$ is compact, which ensures that the functional in \eqref{eq:energy functional} is finite. Similar to what is done for Riemannian harmonic maps (see, e.g., \cite[Section~2]{EL88}) the case of $M$ non-compact can be considered by calling $f$ harmonic if it is a critical value of the energy functional when restricted to any (relatively) compact subdomain. For simplicity, we will also assume that $M$ is simply connected. See Remark~\ref{re:NSC} where we suggest modifications for a non-simply connected manifold. We introduce the idea of regular and singular maps, as well as some conditions for these maps. Finally, in Section~\ref{sec:Harmonic}, we find equations for both the normal and abnormal harmonic maps. We show that these equations are a natural generalization of above-mentioned cases of maps into Riemannian manifolds, as well as abnormal and normal sub-Riemannian geodesics. We also give an explicit differential equation for harmonic maps into the Heisenberg group.

\section{Sub-Riemannian geometry} \label{sec:SRGeometry}
\subsection{Sub-Riemannnian measure space}

A sub-Riemannian manifold is a triple $(M, D, g)$ where $M$ is a connected manifold, $D$ is a subbundle of $TM$ and $g = \langle \cdot \,, \cdot \rangle_g$ is a metric tensor defined on sections of $D$. Throughout the paper, unless otherwise stated, the subbundle $D$ is assumed to be \emph{bracket-generating}, meaning that the sections of $D$ and their iterated brackets span the tangent space at each point of~$M$. This condition is sufficient to ensure that any pair of points $x_0$ and $x_1$ in $M$ can be connected by \emph{a horizontal curve} $\gamma$, i.e., an absolutely continuous curve such that $\dot \gamma(t) \in D_{\gamma(t)}$ for almost every~$t$; see~\cite{Cho39,Ras38}. Thus, the distance
\begin{equation}\label{eq:CCdistance}
d_g(x_0, x_1) = \inf \left\{ \int_0^1 |\dot \gamma(t)|_{g} \, dt \, : \, \begin{array}{c} \text{$\gamma$ is horizontal} \\ \gamma(0) = x_0, \quad \gamma(1) = x_1 \end{array} \right\}
\end{equation}
is well defined. Furthermore, the metric topology with respect to $d_g$ coincides with the manifold topology on $M$. We do not exclude the possibility $D = TM$.

Associated with the sub-Riemannian metric $g$, there is a vector bundle morphism
\begin{equation}\label{eq:sharp}
\sharp^g: T^*M \to D,
\end{equation}
defined by
$$\alpha(v) = \langle \sharp^g \alpha, v \rangle_{g}$$
for any $x \in M$, $\alpha \in T_x^*M$, and $v \in D_x$.
Define a cometric $g^* = \langle \cdot \,, \cdot \rangle_{g^*}$ on $T^*M$ by
$$\langle \alpha, \beta \rangle_{g^*} = \alpha(\sharp^g \beta)=\langle \sharp^g \alpha, \sharp^g\beta\rangle_g, \qquad \alpha, \beta\in \Gamma(T^*M).$$
This cometric is exactly degenerated along the subbundle $\Ann(D) \subseteq T^*M$ of covectors vanishing on $D$. Conversely, given a cometric $g^*$ on $T^*M$ that is degenerated along a subbundle of $T^*M$, we can define the subbundle $D$ of $TM$ as the image of the map $\sharp^g: \alpha \mapsto \langle \alpha, \cdot \rangle_{g^*}$ in~\eqref{eq:sharp}, and a metric $g$ on $D$ by the relation
$$\langle \sharp^g \alpha, \sharp^g \beta \rangle_{g} = \langle \alpha, \beta \rangle_{g^*}.$$
Hence, a sub-Riemannian manifold can equivalently be defined as a connected manifold with a symmetric positive semi-definite tensor $g^*$ on $\Gamma(TM^{\otimes 2})$ degenerating on a subbundle of $T^*M$. In what follows, we will speak about a sub-Riemannian structure interchangeably as $(D,g)$ or $g^*$, assuming that the subbundle $D$ is bracket-generating. For more on sub-Riemannian manifolds, see, e.g., \cite{MR3971262,Mon02}.

\begin{definition}
A sub-Riemannian measure space $(M, D, g, d\mu)$ is a sub-\linebreak Riemannian manifold $(M,D, g)$ with a choice of smooth volume density $d\mu$ on $M$. If $D = TM$, then $d\mu$ is the volume density of the Riemannian metric $g$.  
\end{definition}
On a sub-Riemannian measure space $(M, D, g, d\mu)$ there is a unique choice of second order operator
\begin{equation}\label{eq:LaplaceBeltrami}
\Delta_{g,d\mu} \phi = \dv_{d\mu} \, \sharp^g d\phi,\qquad \phi\in C^{\infty}(M).
\end{equation}
We call the operator in~\eqref{eq:LaplaceBeltrami}~ \emph{the sub-Laplacian} of the sub-Riemannian measure space. Since $D$ is bracket-generating, the classical result of H\"ormander~\cite{Hor67} states that $\Delta_{g,d\mu}$ is a hypoelliptic operator. If the measure $d\mu$ is clear from the context, we simply write $\Delta_g$. We also denote the sub-Riemannian measure space as $(M, g^*,d\mu)$.

We say that a Riemannian metric $\bar{g} = \langle \cdot \,, \cdot \rangle_{\bar{g}}$ is \emph{a taming metric} of $(M, D, g, d\mu)$ if $g$ is the restriction $\bar g|D$ of $\bar{g}$ to $D$ and the volume density of $\bar{g}$ equals $d\mu$. 
\begin{lemma}
Any sub-Riemannian measure space has a taming Riemannian metric.
\end{lemma}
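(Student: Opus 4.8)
The plan is to build the taming metric by the standard device of extending the sub-Riemannian metric $g$ on $D$ to a full Riemannian metric, and then correcting it so that its volume density matches the prescribed density $d\mu$. First I would choose, locally and then globally via a partition of unity, a complement $H$ to $D$ in $TM$, so that $TM = D \oplus H$. Pick any Riemannian metric $k$ on the bundle $H$ (for instance, restrict an auxiliary Riemannian metric on $M$, or build one by partition of unity). Declaring $D \perp H$ and using $g$ on $D$ and $k$ on $H$ produces a Riemannian metric $g_0$ on $M$ with $g_0|D = g$. This already gives a metric taming the sub-Riemannian structure in the sense of restricting correctly to $D$; what remains is to adjust the $H$-part so the volume density becomes exactly $d\mu$.

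Next I would compare the two densities. The volume density $d\mu_{g_0}$ of $g_0$ and the given $d\mu$ are both smooth positive densities on $M$, so their ratio is a smooth positive function: $d\mu = \rho \, d\mu_{g_0}$ for some $\rho \in C^\infty(M)$, $\rho > 0$. Let $r = \dim H = \dim M - \rank D$. I would then rescale only the $H$-component of the metric: define $g = g|D$ on $D$ (unchanged), keep $D \perp H$, and replace $k$ by $\rho^{2/r} k$ on $H$. Call the resulting Riemannian metric $\bar g$. Since we only scaled an $r$-dimensional orthogonal summand by the conformal factor $\rho^{2/r}$, the volume density scales by $(\rho^{2/r})^{r/2} = \rho$, hence $d\mu_{\bar g} = \rho \, d\mu_{g_0} = d\mu$. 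By construction $\bar g|D = g$, so $\bar g$ is a taming metric. If $D = TM$ there is nothing to do: $d\mu$ is by definition the volume density of $g$, so $\bar g = g$ works, and in this degenerate case one should note $r = 0$ and skip the rescaling step.

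The only genuine subtlety, and the step I would be most careful about, is the global existence of the complement $H$ and of the metric $k$ on it: a priori $D$ need not admit a smooth complementary subbundle unless we first fix an auxiliary Riemannian metric $g_1$ on $M$ (which exists by paracompactness) and take $H = D^{\perp_{g_1}}$. Once $H$ is obtained this way, $k := g_1|H$ is a metric on it, and the rest is the routine conformal-rescaling computation above. One should also remark that the taming metric is far from unique, since the choice of $H$ and of $k$ is arbitrary; only the restriction to $D$ and the total volume density are pinned down. This non-uniqueness is harmless for the applications, where a taming metric is used merely as an auxiliary tool.
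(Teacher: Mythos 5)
Your proof is correct and follows essentially the same route as the paper: fix an auxiliary Riemannian metric, take the orthogonal complement $D^\perp$ of $D$, extend $g$ to a metric with $\bar g|D=g$, and then conformally rescale only the $D^\perp$-summand so that the volume density matches $d\mu$. In fact your exponent $\rho^{2/r}$ (with $d\mu=\rho\,d\mu_{g_0}$ and $r=m-k$) is the correct one, since scaling an $r$-dimensional orthogonal summand by $c$ multiplies the density by $c^{r/2}$; the paper's printed factor $\rho^{-1/(m-k)}$ appears to be off by a factor of two in the exponent.
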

\begin{proof}
Let $(M, D, g, d\mu)$ be any sub-Riemannian measure space with $\dim M = m$ and $\rank D = k$. If $k =m$, then by convention $\bar{g}= g$ is a taming Riemannian metric. For $k<m$, we take an arbitrary Riemannian metric $\bar{g}_0$ on $M$ and let $D^\perp$ denote the orthogonal complement of $D$ with respect to $\bar{g}_0$. The rank of $D^\perp$ equals $m-k$. Define a Riemannian metric $\bar{g}_1$ such that $D$ and $D^\perp$ are still orthogonal with respect to $\bar{g}_1$, and $\bar{g}_1|D = g$, $\bar{g}_1|D^{\perp} = \bar{g}_0|D^\perp$. Let $d\bar{\mu}$ be the Riemannian volume density with respect to $\bar{g}_1$, and write $d\bar{\mu} = \rho d\mu$. 
 
Finally, we define the metric $\bar{g}$ to be such that $D$ and $D^\perp$ are orthogonal with respect to $\bar{g}$ and
$$\bar{g}|D =g, \qquad \bar{g}|D^\perp = \rho^{-1/(m-k)}\bar{g}_1|D^{\perp},$$
which gives us the desired Riemannian metric.
\end{proof}

\begin{remark}
[Hausdorff and Popp's measure] A manifold $M$ carries a measure $dx$ which is the pushforward of the Lebesgue measure by the chart map. 
The distance $d_g$ in~\eqref{eq:CCdistance} generated by the sub-Riemannian metric tensor $g$ produces the Hausdorff measure $dH$. Relative to any coordinate system defined sufficiently close to a regular point, $dH = q(x) dx$ is absolutely continuous with respect to $dx$. It is not clear whether $q$ is a smooth function. Another construction of a measure near regular point has been provided by O. Popp (see \cite[Chapter 10]{Mon02}) which gives a measure $d\mu$ with a smooth Radon-Nikodym derivative with respect to $dx$. The latter allows one to define the sub-Laplacian by making use of the integration by parts with respect to the smooth measure $d\mu$, which leads to the sub-Laplacian introduced in~\cite{ABGR09}. For the case of the Carnot groups, both the Hausdorff and the Haar measures are equal up to a constant, and are hence all smooth.
\end{remark}

\subsection{Compatible connections on a sub-Riemannian measure space}

Consider a sub-Riemannian structure $g^*$ on $M$. For a two-tensor field $\xi \in \Gamma(T^*M^{\otimes 2})$ we write
$$\tr_g \xi(\times, \times ) = \xi(g^*),\quad \text{i.e.,}\quad \tr_g \xi(\times, \times)(x)  =\sum_{i=1}^k \xi(v_i, v_i)$$
for an arbitrary orthonormal basis $v_1, \dots, v_k$ of $D_x$ with $k=\rank D$. We want to consider connections on sub-Riemannian manifolds and sub-Riemannian measure spaces. We begin with the following definition of a connection on tensor fields; see, for instance,~\cite[Chapter 4]{MR1468735}.
\begin{definition}
Let $\nabla$ be an affine connection on $TM$.
\begin{enumerate}[\rm (a)]
\item We say that $\nabla$ is compatible with $(D,g)$ $($equiv. $g^*$$)$ if it satisfies the following equivalent conditions:
\begin{enumerate}[\rm (i)]
\item $\nabla g^* = 0$,
\item $\nabla \sharp^g = \sharp^g \nabla$,
\item For any $X_1, X_2 \in \Gamma(D)$, $Z \in\Gamma(TM)$, we have that $\nabla_Z X_1 \in \Gamma(D)$ and
$$Z \langle X_1, X_2 \rangle_g = \langle \nabla_{Z} X_1, X_2 \rangle_g + \langle X_1, \nabla_Z X_2 \rangle_g \, .$$ 
\end{enumerate}
\item We say that $\nabla$ is compatible with $(D,g, d\mu)$ $($equiv.  $(g^*, d\mu)$$)$ if $\nabla$ is compatible with $(D,g)$ $($equiv. $g^*$$)$ and for any $\phi \in C^\infty(M)$
$$\tr_g \nabla_{\times} d\phi(\times) = \sum_{i=1}^k \nabla_{v_i} d\phi(v_i)=\Delta_{g, d\mu}\phi$$
for an orthonormal basis $v_1, \dots, v_k$ of $D_x$.
\end{enumerate}
\end{definition}

The following is known on sub-Riemannian manifolds.
\begin{proposition}
\cite{GrTh16} Let $g^*$ be a sub-Riemannian structure and $d\mu$ a volume density on $M$. Then $(g^*, d\mu)$ has a compatible connection.
\end{proposition}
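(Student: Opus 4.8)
The plan is to build the connection in two stages: first produce an affine connection that is compatible with $(D,g)$ but possibly has the ``wrong'' trace, and then correct it by a zeroth-order term that enforces condition (b) without destroying compatibility with $(D,g)$.

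\emph{Stage 1 (a compatible connection with controlled divergence).} By the previous Lemma, fix a taming metric $\bar g$ with Levi-Civita connection $\bar\nabla$, let $D^\perp$ be the $\bar g$-orthogonal complement of $D$, and let $\pr_D, \pr_{D^\perp}$ be the corresponding projections. Set
\[
\nabla^0_Z X = \pr_D \bar\nabla_Z(\pr_D X) + \pr_{D^\perp}\bar\nabla_Z(\pr_{D^\perp} X).
\]
One checks that $\nabla^0$ is an affine connection, that $\nabla^0 \bar g = 0$, and that $\nabla^0$ preserves $\Gamma(D)$; together with $\bar g|D = g$ this is exactly condition (a)(iii), so $\nabla^0$ is compatible with $(D,g)$. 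Next I would record the identity $\tr_g \nabla^0_\times d\phi(\times) = \dv_{\nabla^0}(\sharp^g d\phi)$: evaluating $\dv_{\nabla^0}$ in a $\bar g$-orthonormal frame $v_1,\dots,v_m$ adapted to $D = \spn\{v_1,\dots,v_k\}$, the terms with $i > k$ drop out because $\nabla^0$ preserves $D$ and $D \perp_{\bar g} D^\perp$, while for $i \le k$ the summands coincide with those of $\tr_g \nabla^0_\times d\phi(\times)$ since $\bar g|D = g$. Finally, since $\dv_{d\mu}$ and $\dv_{\nabla^0}$ are both derivations with the same Leibniz term, their difference is $C^\infty(M)$-linear in the vector field, hence is given by a one-form $\beta$. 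Setting $W = \sharp^g \beta \in \Gamma(D)$ yields
\[
\Delta_{g,d\mu}\phi = \dv_{d\mu}(\sharp^g d\phi) = \tr_g \nabla^0_\times d\phi(\times) + d\phi(W).
\]

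\emph{Stage 2 (correcting the trace).} It remains to absorb the extra term $d\phi(W)$. Write $\nabla = \nabla^0 + A$ for a tensor $A \in \Gamma(T^*M \otimes \End TM)$; then $\tr_g \nabla_\times d\phi(\times) = \tr_g \nabla^0_\times d\phi(\times) - d\phi\big(\sum_{i=1}^k A_{v_i}v_i\big)$, so I want $A$ with $\sum_{i=1}^k A_{v_i}v_i = -W$, while each $A_Z$ maps $D$ into $D$ and $A_Z|D$ is skew with respect to $g$ (this keeps condition (a)(iii) valid, since $\nabla^0$ already satisfies it). An explicit choice that works is
\[
A_Z X = \frac{1}{k-1}\big(\langle \pr_D X, W\rangle_g\, \pr_D Z - \langle \pr_D Z, \pr_D X\rangle_g\, W\big);
\]
a short computation gives $\langle A_Z X_1, X_2\rangle_g + \langle X_1, A_Z X_2\rangle_g = 0$ for $X_1,X_2 \in \Gamma(D)$ and $\sum_{i=1}^k A_{v_i}v_i = -W$. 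Then $\nabla = \nabla^0 + A$ satisfies both (a) and (b).

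\emph{The one caveat}, and the only place where care is needed, is that the formula for $A$ requires $k = \rank D \ge 2$; indeed when $k = 1$ there are no nonzero $g$-skew endomorphisms of $D$ at all, so no correction of this type is possible. This is not a genuine obstruction: if $k = m$ the structure is Riemannian, $d\mu$ is its Riemannian volume by convention, and the Levi-Civita connection already satisfies (a) and (b); and if $k < m$, then bracket-generation forces $k \ge 2$, because a rank-one subbundle is involutive and would give $D = TM$. Everything else is routine bookkeeping: verifying that $\nabla^0$ is a metric connection, the vanishing of the $i>k$ terms in the trace identity, and the tensoriality of $\dv_{d\mu} - \dv_{\nabla^0}$.
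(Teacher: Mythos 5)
Your construction is correct, and I have checked the key computations: $\nabla^0$ is an affine connection preserving $D$ and $D^\perp$ with $\nabla^0\bar g=0$, hence satisfies (a)(iii); the identity $\dv_{\nabla^0}(\sharp^g d\phi)=\tr_g\nabla^0_\times d\phi(\times)$ holds because the $i>k$ terms vanish and the $i\le k$ terms match via $\bar g|D=g$; the difference $\dv_{d\mu}-\dv_{\nabla^0}$ is indeed tensorial since both satisfy the same Leibniz rule; and your $A$ is $g$-skew on $D$ with $\sum_{i=1}^k A_{v_i}v_i=-W$, so $\nabla=\nabla^0+A$ satisfies both (a) and (b). Your handling of the $k=1$ caveat is also right: under the standing bracket-generating hypothesis a rank-one $D$ forces $k=m$, where the Levi-Civita connection already works. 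Note that the paper itself offers no proof of this proposition, only the citation to the reference, so there is no internal argument to compare against; your two-stage route (project the Levi-Civita connection of a taming metric, then add a $g$-skew zeroth-order correction determined by the divergence defect) is essentially the standard construction from the cited literature, and it meshes well with the paper's surrounding material — in particular, Stage 2 could equivalently be phrased through the paper's Lemma~\ref{lemma:Basis}, by arranging a local orthonormal frame of $D$ that is $\nabla$-parallel and divergence-free at a given point. The one presentational nit is that the phrase ``a rank-one subbundle is involutive and would give $D=TM$'' should read that involutivity together with bracket generation forces $D=TM$, contradicting $k<m$; the logic you intend is clearly that, so this is cosmetic.
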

We also prove the following result.
\begin{lemma} \label{lemma:Basis}
\begin{enumerate}[\rm (a)]
\item A connection $\nabla$ is compatible with $g^*$ if and only if for every point $x\in M$ there exists a local orthonormal frame $X_1,\dots, X_k$ of $D$ around $x$ such that $\nabla X_j (x) = 0$.
\item A connection $\nabla$ is compatible with $(g^*, d\mu)$ if and only if for every point $x\in M$ there exists a local orthonormal frame $X_1,\dots ,X_k$ of $D$ around $x$ such that $\nabla X_j (x) = 0$ and $\dv_{d\mu} X_i(x) = 0$.
\end{enumerate}
\end{lemma}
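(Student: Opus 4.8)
Both parts will be proved by reducing each compatibility condition to a pointwise identity and using that a tensor vanishing at every point is the zero tensor. For the forward implication in (a), suppose the adapted frames exist. Near a point $x$ with orthonormal frame $X_1,\dots,X_k$ of $D$ we have $\sharp^g\alpha = \sum_i \alpha(X_i)X_i$, hence $g^* = \sum_{i=1}^k X_i\otimes X_i$ as a symmetric section of $TM^{\otimes 2}$, so $\nabla g^* = \sum_i(\nabla X_i\otimes X_i + X_i\otimes\nabla X_i)$ vanishes at $x$; as $x$ is arbitrary, $\nabla g^*=0$, which is condition (i). For the reverse implication, fix $x$ and start from an arbitrary local orthonormal frame $Y_1,\dots,Y_k$ of $D$ (obtained by Gram--Schmidt from any local frame). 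By condition (iii), $\nabla_Z Y_i\in\Gamma(D)$ for all $Z$, so there are local $1$-forms $\omega_i^j$ with $\nabla Y_i=\sum_j\omega_i^j\otimes Y_j$, and differentiating $\langle Y_i,Y_j\rangle_g=\delta_{ij}$ shows the matrix $\omega=(\omega_i^j)$ is skew-symmetric. I then perform a gauge change $X_i=\sum_j a_i^j Y_j$ with $A=(a_i^j)$ an $O(k)$-valued function: the new connection forms are $\widetilde\omega = dA\,A^{-1}+A\omega A^{-1}$, so it suffices to arrange $A(x)=I$ and $dA(x)=-\omega(x)$. Since $-\omega(x)$ is skew-symmetric, picking local coordinates $u=(u^\alpha)$ centred at $x$, writing $-\omega(x)=\sum_\alpha B_\alpha\,du^\alpha|_x$ with $B_\alpha\in\mathfrak{so}(k)$, and setting $A(u)=\exp\!\big(\sum_\alpha u^\alpha B_\alpha\big)$ does the job; the frame $X_1,\dots,X_k$ is orthonormal because $A$ is orthogonal, and $\nabla X_j(x)=0$.

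For part (b) I would first record the following computation, valid at $x$ for any local orthonormal frame $X_1,\dots,X_k$ of $D$ with $\nabla X_j(x)=0$: from $\nabla_{X_i}d\phi(X_i)=X_i(X_i\phi)-d\phi(\nabla_{X_i}X_i)$ we get $\tr_g\nabla_\times d\phi(\times)(x)=\sum_i X_i(X_i\phi)(x)$; on the other hand $\sharp^g d\phi=\sum_i (X_i\phi)X_i$, and the Leibniz rule $\dv_{d\mu}(\psi X)=X\psi+\psi\,\dv_{d\mu}X$ gives $\Delta_{g,d\mu}\phi(x)=\sum_i X_i(X_i\phi)(x)+\sum_i (X_i\phi)(x)\,\dv_{d\mu}X_i(x)$.

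With this in hand, the ``if'' direction of (b) is immediate: by part (a) the connection is compatible with $g^*$, and for an adapted frame additionally satisfying $\dv_{d\mu}X_i(x)=0$ the correction term above vanishes, so $\tr_g\nabla_\times d\phi(\times)(x)=\Delta_{g,d\mu}\phi(x)$ for every $x$, i.e.\ the remaining compatibility condition holds. For the ``only if'' direction, apply part (a) to obtain near a fixed $x$ an orthonormal frame $Y_1,\dots,Y_k$ of $D$ with $\nabla Y_j(x)=0$; compatibility with $(g^*,d\mu)$ forces the two expressions above to coincide, hence $\sum_i(Y_i\phi)(x)\,\dv_{d\mu}Y_i(x)=0$ for all $\phi\in C^\infty(M)$. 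Since $Y_1(x),\dots,Y_k(x)$ are linearly independent, $d\phi(x)$ can be taken to realize any prescribed values $(Y_i\phi)(x)$, so $\dv_{d\mu}Y_i(x)=0$ for each $i$, and the same frame witnesses the claim.

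The only genuinely non-formal step is the gauge-transformation argument in part (a): producing an orthogonal-matrix-valued function with prescribed $0$-jet and $1$-jet at $x$, which is the sub-Riemannian analogue of passing to a synchronous (normal) frame. Everything else reduces to the Leibniz rule, skew-symmetry of the connection matrix, and the pointwise principle for tensors; in particular part (b) is essentially a corollary of part (a), its content being that the sub-Laplacian condition precisely detects the vanishing of the $d\mu$-divergences of a frame that is $\nabla$-parallel at the base point.
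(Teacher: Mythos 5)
Your proof is correct and follows essentially the same route as the paper: part (a) is reduced to the existence of an orthonormal frame of $D$ that is $\nabla$-parallel at the given point, and part (b) rests on the identity $\Delta_{g,d\mu}\phi=\sum_i X_i^2\phi+\sum_i(X_i\phi)\dv_{d\mu}X_i$ together with the observation that the values $(X_i\phi)(x)$ can be prescribed arbitrarily. The only substantive difference is that where the paper outsources the existence of the synchronous frame to an external reference, you construct it explicitly via a gauge transformation by an orthogonal-matrix-valued function with prescribed $1$-jet $A(x)=I$, $dA(x)=-\omega(x)$; this argument is correct (skew-symmetry of the connection matrix is exactly what keeps $A$ orthogonal) and makes the proof self-contained.
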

\begin{proof}
If $\nabla$ preserves $D$, then $\nabla|D$ is a connection on $D$ preserving the inner product $g$. Hence there is a local orthonormal frame of $D$ that is parallel with respect to $\nabla$ at a given point $x$; see, e.g., \cite[Theorem~2.1 and Remark~2.2]{Gro22} for details. Conversely, let $\alpha$ be an arbitrary one-form and $x \in M$ an arbitrary point. Assume that there exists an orthonormal frame $X_1, \dots, X_k$ of $D$ around $x$ such that it is $\nabla$-parallel at $x$. Completing calculations at $x$, we obtain
$$X | \alpha|^2_{g^*}(x) = \sum_{i=1}^k X (\alpha (X_i))^2(x) =2  \sum_{i=1}^k \alpha(X_i)(x) (\nabla_X \alpha)(X_i)(x) = 2\langle \alpha, \nabla_X \alpha \rangle_{g^*}(x).$$
If we can find such a basis for every point in $M$, it follows that $\nabla$ is compatible with $g^*$. This proves (a).

The result in (b) follows from the identity
$$\Delta_{g,d\mu} f = \sum_{i=1}^k X_i^2 f + \sum_{i=1}^k (X_if) \dv_{d\mu} X_i,$$
that holds for any local orthonormal basis of $M$.
\end{proof}

\begin{corollary}
Let $\nabla$ be a connection compatible with $(g^*, \mu)$ and let $X$ be a horizontal vector field. Then
$$\dv_{d\mu} X = \tr_g \langle \nabla_\times X, \times \rangle_g.$$
\end{corollary}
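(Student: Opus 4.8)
The plan is to verify the identity pointwise, using the adapted local frames furnished by Lemma~\ref{lemma:Basis}(b). First I would note that the right-hand side makes sense: since $\nabla$ is compatible with $g^*$, condition~(iii) in the definition of compatibility gives $\nabla_Z X \in \Gamma(D)$ for every $Z \in \Gamma(TM)$ because $X$ is horizontal, so $\langle \nabla_\times X, \times \rangle_g$ is a genuine two-tensor on $D$ and its $g$-trace is independent of the chosen orthonormal basis of $D_x$. Both $\dv_{d\mu} X$ and $\tr_g \langle \nabla_\times X, \times \rangle_g$ are functions on $M$, so it is enough to show they agree at an arbitrary fixed point $x \in M$, and I am free to compute the trace in whatever orthonormal frame is most convenient at $x$.

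Fix $x$ and invoke Lemma~\ref{lemma:Basis}(b) to obtain a local orthonormal frame $X_1, \dots, X_k$ of $D$ near $x$ with $\nabla X_j(x) = 0$ and $\dv_{d\mu} X_j(x) = 0$ for all $j$. Write $X = \sum_{j=1}^k f_j X_j$ for smooth functions $f_j$ defined near $x$. Applying the Leibniz rule $\dv_{d\mu}(h Y) = h\,\dv_{d\mu} Y + Y(h)$ termwise and evaluating at $x$, the terms $f_j(x)\,\dv_{d\mu} X_j(x)$ vanish, leaving $\dv_{d\mu} X(x) = \sum_{j=1}^k (X_j f_j)(x)$. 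Likewise, expanding $\nabla_{X_i} X = \sum_{j}\big((X_i f_j) X_j + f_j \nabla_{X_i} X_j\big)$ and using $\nabla X_j(x) = 0$ gives $\nabla_{X_i} X(x) = \sum_{j=1}^k (X_i f_j)(x)\, X_j$, so by orthonormality $\langle \nabla_{X_i} X, X_i \rangle_g(x) = (X_i f_i)(x)$. Summing over $i$ yields $\tr_g \langle \nabla_\times X, \times \rangle_g(x) = \sum_{i=1}^k (X_i f_i)(x)$, which is exactly $\dv_{d\mu} X(x)$.

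Since $x$ was arbitrary, the identity holds on all of $M$. I do not expect any serious obstacle here; the computation is routine once the right frame is in hand. The only point deserving care is the legitimacy of evaluating $\tr_g \langle \nabla_\times X, \times \rangle_g$ in the special frame of Lemma~\ref{lemma:Basis}(b) — this is where basis-independence of $\tr_g$ and the simultaneous normalization $\nabla X_j(x) = 0$, $\dv_{d\mu} X_j(x) = 0$ are both used, and it is precisely the content of part~(b) (as opposed to part~(a)) of that lemma that makes the two normalizations available at the same point.
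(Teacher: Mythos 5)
Your proof is correct and follows essentially the same route as the paper's: both fix a point, invoke Lemma~\ref{lemma:Basis}(b) to get an orthonormal frame of $D$ with $\nabla X_j(x)=0$ and $\dv_{d\mu}X_j(x)=0$, expand $X=\sum f_jX_j$, and reduce both sides to $\sum_j (X_jf_j)(x)$. The extra remarks on well-definedness and basis-independence of the trace are fine but not needed beyond what the paper already records.
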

\begin{proof}
For a given point $x \in M$, choose an orthonormal frame $X_1, \dots, X_k$ of $D$ around $x$ with $\nabla X_i(x) = 0$ and $\dv_{d\mu} X_i(x) = 0$. Write $X = \sum_{i=1}^k f_i X_i$. Then
\begin{eqnarray*}
\dv_{d\mu} X(x) &=& 
\sum_{i=1}^k \big(X_i f_i(x) + f_i(x) \dv_{d\mu}X_i(x)\big) \\
&=& \sum_{i=1}^k X_i f_i(x) 
=
\sum_{i=1}^k \langle \nabla_{X_i} X, X_i \rangle_g(x).
\end{eqnarray*}
Since $x \in M$ was arbitrary, the result follows.
\end{proof}

\subsection{Left-invariant sub-Riemannian structures}\label{sec:LIstructure}

Let $G$ be a Lie group with Lie algebra $\mathfrak{g}$. Let $(E,h)$ be a sub-Riemannian structure on $G$. We say that the sub-Riemannian structure is left-invariant if $E$ is a left-invariant distribution and if
$$\langle v, w \rangle_h = \langle a\cdot v, a \cdot w \rangle_{h}, \qquad \text{for any $a \in G$,\ \  $v,w \in E_1=\mathfrak e\subseteq \mathfrak{g}$},$$
where we denote by $a\cdot v$ the action on $v\in \mathfrak{g}$ by the differential of the left translation by $a\in G$, and $1 \in G$ is the identity element. Equivalently, let $\omega$ be \emph{the left Maurer-Cartan form}, given by $\omega(v) = a^{-1} \cdot v \in \mathfrak{g}$ for any $v \in T_a G$. Then $v\in E$ if and only if $\omega(v) \in \mathfrak{e} = E_1 \subseteq \mathfrak{g}$. We then say that $(E,h)$ is obtained by left translation of $(\mathfrak{e},\langle \, \cdot \,, \cdot\rangle)$.

\begin{example}[The Heisenberg group] \label{ex:Hn}
We consider the space $H^n = \mathbb{R}^{2n+1}$ with coordinates $(a,b,c) =  (a_1, \dots, a_n, b_1, \dots, b_n,c)$. We give this space a global frame
\begin{equation}\label{eq:global_frame}
A_j = \partial_{a_j} - \frac{1}{2} b_j \partial_c, \qquad B_j = \partial_{b_j} + \frac{1}{2} a_j \partial_c, \qquad C= \partial_{c}.
\end{equation}
The corresponding coframe is given by $da_j$, $db_j$ and $\theta = dc + \frac{1}{2} \sum_{j=1}^n (b_j da_j - a_j db_j)$. Note the bracket relations
\begin{equation} \label{basisHeis} [C, A_i] = [C, B_j] = [A_i, A_j] = [B_i, B_j] = 0, \qquad [A_i, B_j] = \delta_{ij} C.\end{equation}
Hence, these vector fields form a Lie algebra which we will write as $\mathfrak{h}_n$. We can give $\mathbb{R}^{2n+1}$ a group structure such that the vector fields in~\eqref{eq:global_frame} become left-invariant. The group multiplication is given by
$$(a,b, c) \cdot (\tilde a, \tilde b, \tilde c) = \left(a + \tilde a, b+  \tilde b,  c+\tilde c +\frac{1}{2} \big(\langle a, \tilde b\rangle_{\mathbb{R}^n} - \langle \tilde a, b \rangle_{\mathbb{R}^n} \big) \right). $$
We will define a sub-Riemannian structure $(E,h)$ on $H^n$ by letting $A_1,\ldots,A_n$, $B_1,\ldots,B_n$ be an orthonormal basis.
\end{example}

\section{Horizontal maps into Lie groups} \label{sec:Horizontal}

\subsection{Maps into Lie groups and the Darboux derivative} 

In what follows, we will let $\Omega^p(M,\mathfrak g)$ be the space of $\mathfrak g$-valued differential $p$-forms on a manifold $M$. 
We recall the definition and properties of the Darboux derivative; referring to \cite{Sha97} for more details. Let $G$ be a Lie group with Lie algebra~$\mathfrak{g}$. Let $\omega \in \Omega^1(G, \mathfrak{g})$ be the left Maurer-Cartan form as defined in Section~\ref{sec:LIstructure}. This form satisfies the left Maurer-Cartan equation
\begin{equation} \label{LeftMC} 
d\omega + \frac{1}{2} [\omega, \omega] = 0,
\end{equation}
with $[\omega,\omega]$ being the two-form $(v,w) \mapsto 2[\omega(v), \omega(w)]$. See Appendix~\ref{sec:Forms} for more details.
If $M$ is a given manifold and $f \colon M \to G$ is a smooth map, we say that $\alpha_f : = f^* \omega$ is the \emph{left Darboux derivative} of $f$. It follow from definition that $\alpha_f$ satisfies \eqref{LeftMC}. Conversely, if $\beta \in \Omega^1(M, \mathfrak{g})$ satisfies $d\beta + \frac{1}{2} [\beta, \beta] = 0$, then locally $\beta$ is the Darboux derivative of some function. If the monodromy representation of $\beta$ is trivial (see~\cite[Chapter 3, Theorem 7.14]{Sha97}) then the structural equation implies that $\beta=\alpha_f$ for some map $f\colon M \to G$. Particularly, for a connected, simply connected manifold $M$ the monodromy representation of any $\mathfrak g$-valued one-form is trivial, meaning that any form satisfying the left Maurer-Cartan equation can be represented as a Darboux derivative. Through the rest of the paper, we assume that $M$ is connected and simply connected.

Denote by $\calA \subseteq \Omega^1(M,\mathfrak{g})$ the collection of forms $\alpha$ satisfying $d\alpha + \frac{1}{2} [\alpha, \alpha] =0$, and define
$$T_\alpha\calA = \left\{ \dot \beta(0) \, : \, \begin{array}{c} \beta:(-\ve,\ve) \to \Omega^{1}(M, \mathfrak{g}) \text{ is smooth,} \\
\beta(0)= \alpha, \beta(t) \in \calA \text{ for any $t \in (-\ve,\ve)$.} \end{array}\right\}.$$
\begin{lemma} \label{lemma:tangent}
We have
$$T_\alpha \mathcal{A} = \left\{ dF + [\alpha, F] \, : \, F \in C^\infty(M, \mathfrak{g}) \right\}.$$
\end{lemma}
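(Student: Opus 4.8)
The statement identifies the tangent space $T_\alpha\calA$ with the space of one-forms of the form $dF + [\alpha,F]$. I would prove this by a double inclusion, using the linearization of the Maurer--Cartan equation together with the solvability of an inhomogeneous linear equation guaranteed by simple connectedness.

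First, the inclusion $T_\alpha\calA \supseteq \{dF + [\alpha,F]\}$. Given $F \in C^\infty(M,\mathfrak g)$, I want to produce a curve $\beta(t) \in \calA$ with $\beta(0) = \alpha$ and $\dot\beta(0) = dF + [\alpha,F]$. The natural candidate is obtained by integrating $F$ to a one-parameter family of maps: since $M$ is connected and simply connected and $\alpha \in \calA$, write $\alpha = \alpha_f = f^*\omega$ for some $f\colon M\to G$. Let $g_t\colon M \to G$ be the flow of the (time-independent) vector field on $G$ corresponding to $F$ — more precisely, set $f_t(x) = f(x)\cdot \exp(tF(x))$ or, more invariantly, let $f_t$ be the map whose Darboux derivative we are about to compute — and put $\beta(t) = f_t^*\omega$. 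Then $\beta(t)\in\calA$ automatically because a Darboux derivative always satisfies the Maurer--Cartan equation, and $\beta(0) = \alpha$. The content is the computation $\frac{d}{dt}\Big|_{t=0} f_t^*\omega = dF + [\alpha, F]$; this is the standard formula for the variation of a pulled-back Maurer--Cartan form (the ``gauge transformation'' formula), and it follows from the Cartan formula for the Lie derivative together with the Maurer--Cartan equation \eqref{LeftMC}. I would present this as a short direct computation rather than invoking the flow, perhaps by choosing $\beta(t) = \alpha + t(dF + [\alpha,F])$ directly and verifying the Maurer--Cartan equation holds to first order — but that only gives a formal tangent vector, not an honest curve in $\calA$, so the flow construction (or an appeal to \cite{Sha97}) is the clean way to get genuine membership in $T_\alpha\calA$.

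Second, the inclusion $T_\alpha\calA \subseteq \{dF + [\alpha,F]\}$. Let $\beta(t)$ be any smooth curve in $\calA$ with $\beta(0) = \alpha$, and set $\dot\beta := \dot\beta(0)$. Differentiating $d\beta(t) + \frac12[\beta(t),\beta(t)] = 0$ at $t = 0$ and using the bilinearity of the bracket two-form gives $d\dot\beta + [\alpha, \dot\beta] = 0$, i.e. $\dot\beta$ is closed with respect to the twisted differential $d^\alpha := d + [\alpha, \cdot\,]$ on $\mathfrak g$-valued forms. (Here one uses that $(v,w)\mapsto [\alpha(v),\dot\beta(w)] + [\dot\beta(v),\alpha(w)]$ is the polarization of $[\beta,\beta]$.) Since $\alpha$ satisfies the Maurer--Cartan equation, $d^\alpha$ is a differential, $(d^\alpha)^2 = 0$, so its cohomology is defined; concretely, $d^\alpha$ is the de Rham differential of the flat connection on the trivial bundle $M\times\mathfrak g$ determined by $\alpha$ (equivalently, pull back via $f$ the trivialization of the tangent bundle of $G$). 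Because $M$ is simply connected, this flat bundle has trivial monodromy — this is exactly the monodromy-triviality statement already invoked in the excerpt — hence its first cohomology vanishes (equivalently, the flat bundle is trivial as a flat bundle, with a global frame of parallel sections, so its twisted de Rham complex computes ordinary de Rham cohomology with values in $\mathfrak g$, which vanishes in degree $1$ since $M$ is simply connected). Therefore $d^\alpha\dot\beta = 0$ forces $\dot\beta = d^\alpha F = dF + [\alpha,F]$ for some $F\in C^\infty(M,\mathfrak g)$, which is the desired form.

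**Main obstacle.** The routine part is the differentiation of the Maurer--Cartan equation; the substantive point is the surjectivity in the second inclusion, i.e. that every $d^\alpha$-closed $\mathfrak g$-valued one-form is $d^\alpha$-exact. The cleanest route is to transport the problem to $G$: writing $\alpha = f^*\omega$, the connection $d^\alpha$ is gauge-equivalent (via $f$) to the trivial differential, so $d^\alpha$-closedness of $\dot\beta$ is equivalent to ordinary closedness of a transported one-form, and ordinary closed one-forms on a simply connected $M$ are exact by the Poincar\'e lemma. I expect the only real care needed is bookkeeping the identification of $d^\alpha$ with a flat connection and correctly matching sign/factor conventions for $[\omega,\omega]$ as fixed in Appendix~\ref{sec:Forms}; once those are pinned down, both inclusions are short.
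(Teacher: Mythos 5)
Your proposal is correct and follows essentially the same route as the paper: the forward inclusion is obtained by differentiating the Maurer--Cartan equation and then untwisting the resulting equation $d\dot\beta+[\alpha,\dot\beta]=0$ via the gauge transformation $\dot\beta=\Ad(f^{-1})\tilde\eta$ (your ``transport to the trivial connection''), reducing to exactness of closed one-forms on a simply connected manifold; the reverse inclusion uses the same curve $f_t=f\cdot\exp(tF)$ and the same first-variation computation of the pulled-back Maurer--Cartan form. No substantive differences.
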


\begin{proof}
Let $\beta(t)$ be a differentiable curve in $\Omega^1(M,\mathfrak{g})$ and assume that $d\beta(t) + \frac{1}{2} [\beta(t), \beta(t)] =0$. If we differentiate this relation and assume that $\beta(0)=\alpha$ and $\dot \beta(0)=\eta$, then
$$d\eta + [\alpha, \eta] =0.$$
If $\alpha = f^* \omega$ and we write $\eta = \Ad(f^{-1}) \tilde \eta$, then
\begin{equation} \label{FoundClosed} \Ad(f^{-1}) d \tilde \eta = 0.\end{equation}
We remark that here we are abusing notation to write $\eta|_x = \Ad(f(x)^{-1}) \tilde \eta|_x$, where $f(x)^{-1}$ is the inverse of $f(x)$ with respect to the group operation in $G$. To see that \eqref{FoundClosed} holds, recall first that for any curve $A(t)$ in a Lie group $G$, we have that $\frac{d}{dt} \Ad(A(t)^{-1}) =- \ad(\omega(\dot A(t))) \Ad(A(t)^{-1})$, where $\omega$ is the Maurer-Cartan form for the group $G$. Considering the special case where $A(t) = f(\gamma(t))$ for an arbitrary smooth curve $\gamma(t)$ in $M$ with $f^* \omega=\alpha$, we get the formula for the differential
$$d\Ad(f^{-1}) = - \ad(f^*\omega) \Ad(f^{-1}) = - \ad(\alpha) \Ad(f^{-1}).$$
Using the definition of $\eta= \Ad(f^{-1}) \tilde \eta$, this leads to
$$0 = d\eta + [\alpha, \eta] = (d\Ad(f^{-1})) \wedge \tilde \eta + \Ad(f^{-1}) d\tilde \eta + [\alpha, \Ad(f^{-1}) \tilde \eta] =  \Ad(f^{-1}) d\tilde \eta.$$

In summary, the form $\tilde \eta \in \Omega^1(M, \mathfrak{g})$ is closed and we can find a function $\tilde F\colon M \to \mathfrak{g}$ such that $\tilde \eta = d\tilde F$ due to the vanishing de Rham cohomology; see~\cite[Theorem 11.14]{MR1930091}. Furthermore, if we define $F = \Ad(f^{-1}) \tilde F$, then
$$\beta = \Ad(f^{-1}) d\tilde F = dF + [\alpha,F].$$

Conversely, for any $F \in C^\infty(M, \mathfrak{g})$, we can define a curve $g(t) = f\cdot \exp({tF})$ in the space of smooth maps $M\to G$ and $\beta(t) = g(t)^* \omega$. Here $\exp\colon \mathfrak g\to G$ is the group exponential. Let $v \in T_xM$, $x\in M$, be arbitrary and define $\gamma(s)\colon(-\epsilon, \epsilon)\to M$, $\epsilon>0$, as a curve with $\gamma(0)=x$ and $\partial_s\gamma(0) = v$. If we set $\Gamma(s,t) = g(t) (\gamma(s))$, then we compute 
\begin{align*}
& \dot \beta(0)(v)  = \partial_t \omega( \partial_s \Gamma(s,t)) |_{(s,t) = (0,0)} = \partial_t \Gamma^*\omega(\partial_s) |_{(s,t) = (0,0)}
\\
&=  \Big(\partial_s \Gamma^* \omega(\partial_t) - d(\Gamma^* \omega)(\partial_s , \partial_t)\Big)|_{(s,t)= (0,0) } \\
&=\Big(\partial_s\big(\omega(\partial_t\Gamma)\big)- d(\Gamma^* \omega)(\partial_s , \partial_t)\Big)|_{(s,t)= (0,0)} 
\\
& =  \partial_s F\big(\gamma(s)\big)|_{s=0} + [\omega( \partial_s \Gamma), \omega(\partial_t \Gamma )]|_{(s,t) = (0,0)} \\
& = dF(v) + [\alpha(v), F(x)].
\end{align*}
Recall that  $\alpha = f^* \omega$. The result follows.
\end{proof}

We want to close our space of Darboux derivatives into a Hilbert space. Let $(M, D,g, d\mu)$ be a sub-Riemannian measure space and let $\bar{g}$ be a taming Riemannian metric.
Extend the inner product on $\mathfrak{e}$ to a full inner product on $\mathfrak{g}$. These choices give us an induced inner product on $\wedge^k T^*M \otimes \mathfrak{g}$, which allows us to define an $L2$-inner product $\langle \beta, \beta \rangle = \int_M \langle \beta(x), \beta(x) \rangle \, d\mu(x)$ for any $\beta \in \Omega^k(M,\mathfrak{g})$. With this definition, we consider $L^2\Omega(M,\mathfrak{g}) = \oplus_{k=0}^{\dim M} L^2\Omega^k(M,\mathfrak{g})$ as the space of $L^2$-forms with values in $\mathfrak{g}$. We remark that since $M$ is compact and $\mathfrak{g}$ is finite dimensional, any other choice of taming Riemannian metric $\bar{g}$ and inner product of $\mathfrak{g}$ will give us an equivalent $L^2$-inner product, meaning in particular that $L^2$-forms are independent of these choices.
More about the theory of $L^p$ forms can be found, for instance in~\cite{MR1297538}.

\begin{corollary} \label{cor:Hilbert}
Assume that $M$ is simply connected and compact. Then the closure $\overline{\calA}$ of $\calA$ in $L^2\Omega^1(M,\mathfrak{g})$ is a Hilbert submanifold of $L^2\Omega^1(M,\mathfrak{g})$ with tangent space $\overline{T_\alpha \mathcal{A}} \subseteq L^2 \Omega^1(M, \mathfrak{g})$.
\end{corollary}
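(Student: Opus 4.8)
The plan is to exhibit $\calA$ (hence its closure) as the zero set of a smooth map between Hilbert spaces whose derivative is surjective with complemented kernel, so that the implicit function theorem / submersion theorem for Banach (Hilbert) manifolds applies. First I would set up the relevant Hilbert spaces: rather than working with the $L^2$-forms directly — where the exterior derivative is only densely defined — I would introduce the appropriate Sobolev completions. Let $W^{1,2}\Omega^1(M,\fg)$ be the Sobolev space of $\fg$-valued $1$-forms with one $L^2$-derivative (using the taming metric $\bar g$, the measure $d\mu$, and the chosen inner product on $\fg$; by compactness of $M$ these choices are immaterial up to equivalence of norms). Define the \emph{Maurer--Cartan operator}
$$\Phi\colon W^{1,2}\Omega^1(M,\fg)\to L^2\Omega^2(M,\fg),\qquad \Phi(\beta)=d\beta+\tfrac12[\beta,\beta].$$
The quadratic term $[\beta,\beta]$ is a continuous bilinear map $W^{1,2}\times W^{1,2}\to L^2$ by the Sobolev multiplication theorem (or Rellich, since $W^{1,2}\hookrightarrow L^4$ in the relevant range — and more robustly one can work in $W^{1,2}\cap L^\infty$ or pass to higher Sobolev order $W^{s,2}$ with $s>\dim M/2$ to make the multiplication estimate trivial), and $d$ is bounded $W^{1,2}\to L^2$, so $\Phi$ is a smooth (in fact quadratic, hence real-analytic) map of Hilbert spaces, and $\calA\cap W^{1,2}=\Phi^{-1}(0)$.

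Next I would compute the derivative: $D\Phi_\beta(\eta)=d\eta+[\beta,\eta]$, which is precisely the operator appearing in Lemma \ref{lemma:tangent}. The key point is to show that at every $\alpha\in\calA$ this linear operator $D\Phi_\alpha\colon W^{1,2}\Omega^1\to L^2\Omega^2$ has closed, complemented image and a complemented kernel — equivalently, that $0$ is a regular value in the submersion sense. Writing $\alpha=f^*\omega$ and substituting $\eta=\Ad(f^{-1})\tilde\eta$ as in the proof of Lemma \ref{lemma:tangent}, the operator conjugates to $\tilde\eta\mapsto \Ad(f^{-1})\,d\tilde\eta$, i.e. after the bounded invertible change of variable $\eta\leftrightarrow\tilde\eta$ it becomes simply the exterior derivative $d\colon W^{1,2}\Omega^1(M,\fg)\to L^2\Omega^2(M,\fg)$. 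By Hodge theory on the compact manifold $(M,\bar g)$ (applied componentwise in $\fg$), $d$ on this range has closed image, namely the $L^2$-closure of the exact $2$-forms, with kernel the closed $1$-forms; both are complemented (the kernel splits off the coexact part via the Hodge decomposition, and since $M$ is simply connected the closed $1$-forms are exactly $dW^{2,2}\Omega^0$, matching Lemma \ref{lemma:tangent}). Hence $D\Phi_\alpha$ is a split surjection onto a fixed closed subspace $\mathcal Z\subseteq L^2\Omega^2$, uniformly in $\alpha$, with kernel $T_\alpha\calA=\{dF+[\alpha,F]\}$ as already identified.

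Then the Banach submersion theorem (e.g. \cite[Ch.\ III, \S2]{MR1666820} or Lang's \emph{Fundamentals of Differential Geometry}) gives that $\Phi^{-1}(0)=\calA\cap W^{1,2}$ is a smooth Hilbert submanifold of $W^{1,2}\Omega^1(M,\fg)$ with tangent space $\ker D\Phi_\alpha=T_\alpha\calA$ at each point; here one uses that every form satisfying the Maurer--Cartan equation is globally a Darboux derivative because $M$ is simply connected, so the substitution $\eta=\Ad(f^{-1})\tilde\eta$ is available at each point of $\calA$. Finally I would transfer this to the $L^2$-setting: the inclusion $W^{1,2}\Omega^1\hookrightarrow L^2\Omega^1$ is continuous with dense image, $\calA$ is already a subset of the smooth (hence $W^{1,2}$) forms, and one checks that the $W^{1,2}$-submanifold chart maps extend/descend to give $\overline{\calA}$, the $L^2$-closure, the structure of a Hilbert submanifold of $L^2\Omega^1(M,\fg)$ with tangent space the $L^2$-closure $\overline{T_\alpha\calA}$. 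The main obstacle I anticipate is precisely this last transfer — reconciling the natural regularity ($\calA$ lives among smooth/$W^{1,2}$ forms, where $d$ is bounded) with the statement's demand for a submanifold of the \emph{$L^2$} space; the clean way is to observe that $\overline{\calA}$ is the graph, in suitable local coordinates coming from the Hodge decomposition $L^2\Omega^1=d(L^2\Omega^0)\oplus(\text{coexact part})$, of a smooth map sending the "gauge" component $F$ to the correction $[\alpha,F]$ plus higher order, and that this graph description is manifestly $L^2$-closed, with the tangent space at $\alpha$ being $\overline{\{dF+[\alpha,F]:F\in C^\infty(M,\fg)\}}$ as claimed.
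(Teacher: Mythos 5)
Your strategy---realizing $\calA$ as the zero set of the Maurer--Cartan operator on a Sobolev completion and invoking the Banach submersion theorem---is genuinely different from the paper's. The paper instead builds, around each $\alpha=f^*\omega$, an explicit chart
\[
\Phi(\beta)=(f\cdot e^{F_\beta})^*\omega=\alpha+\sum_{n=0}^\infty\frac{(-1)^n\ad(F_\beta)^n}{(n+1)!}\,\beta ,
\]
where $F_\beta$ is obtained by solving $d\tilde F_\beta=$ (a conjugate of) $\beta$ with $\int_M\tilde F_\beta\,d\mu=0$ and is bounded in $L^2$ by $\|\beta\|_{L^2}$ via the Poincar\'e inequality. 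The decisive feature of that chart is that it involves \emph{no derivatives of} $\beta$, so it extends by continuity to $\overline{T_\alpha\calA}\subseteq L^2\Omega^1(M,\fg)$ and parametrizes a neighbourhood in the $L^2$-closure directly.

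The gap in your argument is exactly the step you flag as ``the main obstacle,'' and your proposed resolution does not close it. The corollary concerns the closure of $\calA$ in $L^2\Omega^1(M,\fg)$: an element of $\overline{\calA}$ is merely an $L^2$-limit of smooth Maurer--Cartan forms and need not lie in $W^{1,2}$ at all, so the submersion theorem applied in $W^{1,2}$ produces a submanifold of a different ambient space whose underlying set ($\Phi^{-1}(0)$, the weak $W^{1,2}$ solutions) is not obviously related to $\overline{\calA}^{L^2}$ in either direction---note also that $\Phi^{-1}(0)$ strictly contains $\calA$ (which consists of smooth forms), and identifying weak solutions with limits of smooth ones is itself a nontrivial regularity/density statement. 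More importantly, the $W^{1,2}$-charts cannot ``descend'' to $L^2$, because inverting them requires applying $d$, which is unbounded on $L^2$. Your fallback---describing $\overline{\calA}$ locally as a graph over part of the Hodge decomposition---is not worked out: $T_\alpha\calA=\{dF+[\alpha,F]\}$ is not the space of exact forms, the ``correction'' $[\alpha,F]$ becomes a product of two merely-$L^2$ objects after completion (so not evidently in $L^2$), and no argument is given that the graph map is defined, let alone smooth, on the $L^2$-completion. A secondary wrinkle: the image of $D\Phi_\alpha$ is $\Ad(f^{-1})\,\overline{d\,W^{1,2}\Omega^1}$, which varies with $\alpha$, so ``split surjection onto a fixed closed subspace $\mathcal Z$, uniformly in $\alpha$'' needs justification (the Maurer--Cartan operator is never a submersion onto all of $L^2\Omega^2$; its image is constrained by the identity $d\Phi(\beta)+[\beta,\Phi(\beta)]=0$). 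The missing idea is precisely the paper's: charts that are zeroth-order in $\beta$, which is what makes passage to the $L^2$-closure legitimate.
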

\begin{proof}
Let $\alpha \in \calA$ be the Darboux derivative $\alpha = \alpha_f = f^* \omega$ of a map $f$. We consider an arbitrary curve $\beta \in \mathcal{A}$ that can be written as $\beta  = dF + [\alpha, F]$ for some $F \in C^\infty(M,\mathfrak{g})$. From the proof of Lemma~\ref{lemma:tangent}, we note that if $F = \Ad(f^{-1}) \tilde F$, then $\Ad(f^{-1}) \beta = d\tilde F$. We denote by $\tilde F_\beta$ a unique solution to this equation satisfying $\int_M \tilde F_\beta d\mu = 0$, and we define
$F_\beta = \Ad(f^{-1}) \tilde F_\beta$. Then,
\begin{align*} 
\| F_\beta\|_{L^2} & \leq \|\Ad(f^{-1})\|_{L^\infty} \| \tilde F_\beta\|_{L^2} 
\\
&  \stackrel{\text{Poincar\'e}}{\leq} C \|\Ad(f^{-1})\|_{L^\infty}\| d\tilde F_\beta\|_{L^2} \leq  C \|\Ad(f^{-1})\|_{L^\infty} \|\Ad(f)\|_{L^\infty} \| \beta \|_{L^2} 
\end{align*}
for some constant $C>0$. Note that the linear map $\beta \mapsto \tilde F_\beta$ is bounded and invertible with respect to the $L^2$ metric, which, in particular, is smooth. Here we have used the Poincar\'e inequality for compact Riemannian manifolds found in, e.g., \cite[Theorem~2.10]{Heb99}. It follows that this map can be extended by limits to be well defined as a map from $\overline{T_\alpha \calA}$ to $\{ F \in L^2(M, \mathfrak{g}) \, : \, \int_M \Ad(f) F \, d\mu =0 \}$.

Continuing, we introduce a map $\Phi \colon T_\alpha \calA \mapsto \calA$ as
$$\Phi(\beta) = (f \cdot e^{F_\beta})^* \omega,\quad \text{for}\quad\beta\in T_\alpha \calA.$$
We observe then that for any $v \in T_xM$, we can apply the formula for the differential of the Lie group exponential to obtain
\begin{align*}
\Phi(\beta)(v) & = e^{- F_{\beta} (x)}   f(x)^{-1} df(v)  e^{F_{\beta}(x)}
+  \frac{1- e^{-\ad(F_{\beta}(x))}}{\ad(F_{\beta}(x))} dF_{\beta}(v)  
\\
& = e^{- \ad(F_{\beta}(x))} \alpha 
+ \frac{1- e^{-\ad(F_{\beta}(x))}}{\ad(F_{\beta}(x))} \big(\beta(v) + \ad(F_\beta(x)) \alpha(v)\big) 
\\
& = \alpha(v) + \frac{1- e^{-\ad(F_{\beta}(x))}}{\ad(F_{\beta}(x))} \beta(v),
\end{align*} 
meaning that
\begin{align*}
\Phi(\beta) & = \alpha + \frac{1- e^{-\ad(F_{\beta})}}{\ad(F_{\beta})} \beta = \alpha + \sum_{n=0}^\infty \frac{(-1)^n \ad(F_\beta)^n}{(n+1)!} \beta.
\end{align*} 
This map is well defined for any $\beta \in \overline{T_\alpha \calA}$, giving a smooth map $\Phi:\overline{T_\alpha \calA} \to \overline{\calA} \subseteq L^2 \Omega^1(M, \mathfrak{g})$.
Furthermore, we see that its Fr\'echet differential at $\beta =0$ is given by
$$D\Phi|_0(\beta) = \beta.$$
Thus, $\Phi$ is locally injective, so it can be used as a chart close to $0 \in \overline{T_\alpha \calA}$. Since $\calA$ is dense in $\overline{\calA}$ the result follows.
\end{proof}

\subsection{Horizontal maps}
For the rest of this section, $(M,D,g,d\mu)$ will be a simply connected, compact sub-Riemannian measure space while $G$ will be a Lie group with Lie algebra $\mathfrak{g}$ and left Maurer-Cartan form $\omega$. The structure $(E,h)$ on $G$ will be defined by left translation of $(\mathfrak{e}, \langle\cdot\, , \cdot\rangle )$.
We introduce the following concept.
\begin{definition}
Let $(M, D, g)$ and $(N, E, h)$ be two sub-Riemannian manifolds. We say that a smooth map $f\colon M \to N$ is \emph{horizontal} if $df(D) \subseteq E$.
\end{definition}
To simplify the discussion in this paper, we only consider the case when $N = G$ is a Lie group $G$ with a left-invariant sub-Riemannian structure $(E,g)$ that is the left translation of a vector space $\mathfrak{e}\subset\mathfrak g$ and a scalar product $\langle \cdot \,, \cdot \rangle$ on $\mathfrak{e}$. Then, $f\colon M \to G$ is horizontal if and only if $\alpha_f = f^* \omega$ sends $D$ into $\mathfrak{e}$. We write $\calA_{D,E}$ for the collection of such forms $\alpha_f$. 

Consider $\Omega^1(D, V) = \Gamma(D^* \otimes V)$ as partial one-forms only defined on $D$ with values in a vector space $V$. Write $L^2 \Omega^1(D,V)$ for its $L^2$-closure. Consider $\overline{\calA_{D,E}} \subseteq \overline{\calA}$. Define a linear map 
\begin{equation}\label{eq:P}
P\colon L^2 \Omega^1(M, \mathfrak{g}) \to L^2\Omega^1(D, \mathfrak{g}/\mathfrak{e})\quad
\text{by}\quad
P(\alpha) = \alpha|D \mod \mathfrak{e}.
\end{equation}
Then $\overline{\calA_{D,E}} = \ker P \cap \overline{\calA}$. 

\begin{definition}
We say that $\alpha \in \overline{\calA_{D,E}}$ is regular $($respectively, singular$)$ if $\alpha$ is a regular $($respectively, singular$)$ point of $P|_{\calA}$; that is the differential of the map $P$ is surjective (not surjective) at $\alpha\in\mathcal A$. We say that a sub-Riemannian horizontal map $f\colon M \to G$ is regular $($respectively, singular$)$, if its Darboux derivative $\alpha_f \in \calA_D$ is regular $($respectively, singular$)$.
\end{definition}
Since $\overline{\mathcal{A}_{D,E}}= \ker P \cap \overline{\calA}$, the implicit function theorem implies that $\overline{\mathcal{A}_{D,E}}$ has the local structure of a manifold around any regular $\alpha$.

\subsection{Strong bracket generating condition}
We list the conditions for distributions on $M$ and $G$, which guarantee the absence of singular morphisms. 
\begin{definition}
We say that $\mathfrak{e}\subset\mathfrak g$ is a strongly $q$-bracket generating subspace of $\mathfrak g$ if for any $1 \leq l \leq q$ and any set of linearly independent vectors $A_1, \dots, A_l \in \mathfrak{e}$ and any $Z_1, \dots, Z_l\in \mathfrak{g}$, there exists an element $B \in \mathfrak{e}$ such that
$$Z_j - [A_j, B] \in \mathfrak{e}, \qquad j=1, \dots, l.$$
\end{definition}

\begin{example} \label{ex:sb}
Let $\theta$ be a left-invariant one-form on a $(2n+1)$-dimensional Lie group $G$ and define $\ker \theta|_1 = \mathfrak{e} \subseteq \mathfrak{g}$. Assume that $d\theta|(\wedge^2 \mathfrak{e})$ is non-degenerate, i.e., $\theta$ is a contact form on $G$. We can find a basis $A_1, \dots, A_n, B_1, \dots, B_{n}$ of $\mathfrak{e}$ such that $d\theta(A_i, A_j) = d\theta(B_i, B_j) = 0$ and $d\theta(B_i, A_j) = \delta_{ij}$. Let $Z \in \mathfrak{g}$ be the unique element satisfying $\theta(Z) = 1$ and $d\theta(Z, \, \cdot \,) = 0$. To find $B\in\mathfrak e$, we need to solve the equations
$$[p_j A_i, B]= \tilde p_j Z,\qquad [q_j B_i, B]= \tilde q_j Z, \qquad p_j \neq 0,\quad q_j \neq 0.$$
One can easily check that
$$B = \sum_{i=1}^n \left( \frac{\tilde p_i}{p_i} B_i -\frac{\tilde q_i}{q_i} A_i\right).$$
is a solution. This shows that such structures are strongly $2n$-bracket generating.
In particular, we note that the Heisenberg group $H^n$ is has a strong $2n$-bracket generating distribution.
\end{example}

\begin{proposition} \label{prop:sb}
Let $(M,D,g)$ be a sub-Riemannian manifold, where $M$ is simply connected and $D$ has rank $k \geq 2$. Let $\mathfrak{g}$ be a Lie algebra with a generating subspace $\mathfrak{e} \subseteq \mathfrak{g}$ of positive codimension.
Let $\alpha \in \Omega^1(M, \mathfrak{g})$ be a one-form satisfying $\alpha(D) \subseteq \mathfrak{e}$.
\begin{enumerate}[\rm (a)]
\item Assume that there exists a non-intersecting horizontal loop $\gamma:[0,1] \to M$ such that $\alpha(\dot \gamma(t)) = 0$ for almost every $t \in [0,1]$. Then $\alpha$ is singular.
\item Assume that $\mathfrak{e} \subseteq \mathfrak{g}$ is strongly $k$-bracket generating. If $\alpha|D$ is injective at every point, then $\alpha$ is regular.
\end{enumerate}
\end{proposition}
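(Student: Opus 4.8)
The plan is to make the differential of $P|_{\calA}$ explicit via Lemma~\ref{lemma:tangent} and then analyse its surjectivity in each of the two cases. By Lemma~\ref{lemma:tangent} and linearity of $P$, the differential of $P|_{\calA}$ at $\alpha$ is the map
$$
L \colon C^\infty(M,\mathfrak{g}) \to \Omega^1(D,\mathfrak{g}/\mathfrak{e}), \qquad L(F) = \big(dF + [\alpha,F]\big)\big|D \ \bmod\ \mathfrak{e},
$$
so the task is to show that $L$ is \emph{not} surjective in case (a), and \emph{is} surjective in case (b). Write $\pi\colon \mathfrak{g}\to\mathfrak{g}/\mathfrak{e}$ for the quotient map; it is nonzero since $\mathfrak{e}$ has positive codimension.

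For part (a), I would use the loop $\gamma$ to build a linear functional annihilating the image of $L$ but not all of $\Omega^1(D,\mathfrak{g}/\mathfrak{e})$. Fix $0 \neq \lambda_0 \in (\mathfrak{g}/\mathfrak{e})^*$ and set $\ell(\zeta) = \int_0^1 \lambda_0\big(\zeta(\dot\gamma(t))\big)\,dt$. For any $F$, since $\gamma$ is horizontal and $\alpha(\dot\gamma(t)) = 0$ for a.e.\ $t$, one has $L(F)(\dot\gamma(t)) = \pi\big(\tfrac{d}{dt}(F\circ\gamma)(t)\big)$ a.e., hence
$$
\ell\big(L(F)\big) = \int_0^1 \tfrac{d}{dt}\big[\lambda_0\big(\pi(F(\gamma(t)))\big)\big]\,dt = \lambda_0\big(\pi(F(\gamma(1)))\big) - \lambda_0\big(\pi(F(\gamma(0)))\big) = 0
$$
because $\gamma$ is a loop. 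On the other hand, a standard bump construction—using crucially that $\gamma$ is non-self-intersecting, so that the contribution near one point of $\gamma$ cannot be cancelled by contributions elsewhere—produces a smooth $\zeta_0 \in \Omega^1(D,\mathfrak{g}/\mathfrak{e})$ supported near a point $\gamma(t_0)$ with $\dot\gamma(t_0)\neq 0$ and with $\ell(\zeta_0) \neq 0$. Then $\zeta_0$ is not in the image of $L$, so $\alpha$ is singular. The only point requiring care here is the construction of $\zeta_0$.

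For part (b), the key observation is that if $F$ takes values in the fixed subspace $\mathfrak{e}\subseteq\mathfrak{g}$, then $dF$ does as well, so $L(F) = \pi\big([\alpha(\cdot),F]\big|D\big)$ becomes a \emph{zeroth-order}, purely algebraic operator. It therefore suffices to show that the bundle morphism
$$
\Psi\colon \underline{\mathfrak{e}} \to D^*\otimes(\mathfrak{g}/\mathfrak{e}), \qquad \Psi_x(B)(v) = \pi\big([\alpha(v),B]\big) \quad (v\in D_x),
$$
is surjective at every $x$. Choosing a local orthonormal frame $X_1,\dots,X_k$ of $D$ near $x$ and putting $A_i = \alpha(X_i(x)) \in \mathfrak{e}$, the pointwise injectivity of $\alpha|D$ makes $A_1,\dots,A_k$ linearly independent, and the hypothesis that $\mathfrak{e}$ be strongly $k$-bracket generating says precisely that $B\mapsto\big(\pi[A_1,B],\dots,\pi[A_k,B]\big)$ is onto $(\mathfrak{g}/\mathfrak{e})^k$, i.e.\ $\Psi_x$ is onto. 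A fibrewise surjective bundle morphism admits a smooth right inverse $R$ (for instance $R = \Psi^*(\Psi\Psi^*)^{-1}$ with respect to a bundle metric), so for any $\zeta\in\Omega^1(D,\mathfrak{g}/\mathfrak{e})$ the section $F = R\zeta$ has values in $\mathfrak{e}$ and satisfies $L(F) = \Psi(F) = \zeta$. Hence $L$ is surjective and $\alpha$ is regular.

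The genuinely delicate point, should one insist on the Hilbert-space reading of ``regular''/``singular'' used for the implicit function theorem rather than the smooth-category statement above, is matching these arguments with the $L^2$ formulation: for (b) this is routine, since $\Psi$, being fibrewise surjective over the compact $M$, induces a bounded surjection of the corresponding $L^2$-spaces admitting a bounded right inverse; for (a) one may simply work with $P|_{\calA}$ on smooth forms as in the definition. The main work is thus the construction of the test form $\zeta_0$ in (a), and the verification that the strong bracket-generating hypothesis is exactly what trivialises the obstruction in (b).
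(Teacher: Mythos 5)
Your proposal is correct and follows essentially the same route as the paper: part (b) is the paper's argument (take $F$ with values in $\mathfrak{e}$ so that the equation becomes the pointwise algebraic one, solved by the strong $k$-bracket-generating hypothesis together with the linear independence of the $\alpha(X_i)$ coming from injectivity of $\alpha|D$), and part (a) is the paper's two-arc contradiction recast dually as a single loop integral of a functional annihilating the image of $D_\alpha P$. The one step you leave at the level of an assertion --- producing a test form with nonzero integral along the embedded loop, where ``supported near one point'' alone does not preclude cancellation since an injective loop may re-enter any neighbourhood of $\gamma(t_0)$ --- is asserted with exactly the same amount of detail in the paper's own proof (the existence of $\psi_0$ with $C>0$), so it is not a gap relative to the source.
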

\begin{proof}
Choose a complement $\mathfrak{f}$ to $\mathfrak{e}$ in $\mathfrak{g}$.
Let $F \in C^\infty(M, \mathfrak{g})$ be a function and write $F = F_{\mathfrak{e}} + F_{\mathfrak{f}}$ according to the decomposition $\mathfrak{g} = \mathfrak{e} \oplus \mathfrak{f}$. Recall that the regularity of $\alpha$  is equivalent to the assumption that for any one-form $\psi\in \Omega^1(M, \mathfrak{g})$, one can choose $F_{\mathfrak{e}}$ and $F_{\mathfrak{f}}$ such that
\begin{equation}\label{eq:psi}
dF_{\mathfrak{f}}|D + [\alpha |D, F_{\mathfrak{e}}] + [\alpha |D, F_{\mathfrak{f}}] = \psi|D \mod \mathfrak{e}.
\end{equation}
\begin{enumerate}[\rm (a)]
\item If $\gamma: [0,1] \to M$ is a non-intersecting horizontal loop, then $x_1 = \gamma(1/2) \neq x_0 = \gamma(0) = \gamma(1)$ by assumption. Define $\gamma_1, \gamma_2: [0,1] \to M$ by $\gamma_1(t) = \gamma(t/2)$ and $\gamma_2(t) = \gamma(1- t/2)$, which are non-intersecting horizontal curves from  $x_0$ to $x_1$. Let $U$ be an open set that does not intersect $\gamma_2$, but intersects with a subset of $\gamma_1$ of positive length. Let $\psi = \psi_0 \otimes Z$, where $Z \in \mathfrak{f}$, $Z\neq 0$, and $\psi_0$ denotes a real valued one-form with support in $U$ such that $C= \int_0^1 \psi_0(\dot \gamma(t)) \,  dt >0$. If we find a function $F = F_{\mathfrak{e}}+ F_{\mathfrak{f}}$ solving \eqref{eq:psi} then
$$F_{\mathfrak{f}}(x_1) - F_{\mathfrak{f}}(x_0) = \int_{\gamma_1} \psi \, = C Z\neq 0.$$ 
However, in order for \eqref{eq:psi} to hold, we would also need to $F_{\mathfrak{f}}(x_1) - F_{\mathfrak{f}}(x_0)$ tp equal $\int_{\gamma_2} \psi$ which is clearly 0 by the definition of $\psi$, giving us a contradiction.
\item If $\alpha|D$ is injective, then we can choose $F_{\mathfrak{f}} =0$. To show the regularity of $\alpha$ we need to solve the equation $[\alpha|D , F_{\mathfrak{e}}] = \psi|D \mod \mathfrak{e}$. The assumption of $D$ being strongly $k$-bracket generating implies that the equation $[\alpha|D , F_{\mathfrak{e}}] = \psi|D \mod \mathfrak{e}$ has a solution for any one-form $\psi\in \Omega^1(M, \mathfrak{g})$. To be more precise, let $X_1$, $\dots$, $X_k$ be a local basis of $D$ and $\psi\in \Omega^1(M, \mathfrak{g})$. We respectively define $A_j \in C^\infty(M, \mathfrak{e})$ and $Z_j \in C^\infty(M, \mathfrak{g})$ by $A_j = \alpha(X_j)$ and $Z_j = \psi(X_j)$, $j=1, \dots, k$. Then we can then define $F_{\mathfrak{e}}$ such that $[ A_j, F_{\mathfrak{e}}] = Z_j$ by the strongly $k$-bracket generating condition on $D$.
\end{enumerate}
\end{proof}

\section{Harmonic maps} \label{sec:Harmonic}
\subsection{Normal and abnormal harmonic maps}
Let $(M, D, g, d\mu)$ be a given sub-Riemannian measure space and let $(G, E,h)$ be a Lie group with a left-invariant sub-Riemannian structure. For a horizontal map $f\colon M \to G$ with Darboux derivative $\alpha_f$, we define its energy as
\begin{align} \label{eq:SREnergy}
\scrE(f) & = \frac{1}{2} \int_M |df |_{g^* \otimes f^*h}^2 d\mu = \frac{1}{2} \int_M \tr_g (f^*h)(\times, \times)\, d\mu \\ \nonumber
&  = \frac{1}{2} \int_M |\alpha_f|^2_{g^*} d\mu =: \hat \scrE(\alpha_f).
 \end{align}
We note that if $v_1, \dots, v_k \in D_x$ and $w_1, \dots, w_n \in E_{f(x)}$ are respective orthonormal bases then
$$|df |_{g^* \otimes f^*h}^2(x) = \sum_{i=1}^k | df(v_i) |_h^2 = \sum_{j=1}^n \sum_{i=1}^k \langle w_j, df(v_i) \rangle_h^2.$$
We would generalize the definition of harmonic maps from~\cite{MR164306} to the sub-Riemannian setting, saying that $f$ is harmonic if it is a critical value of $\scrE$. Instead, we 
use the Darboux derivative to make this definition precise. For $\alpha \in \overline{\calA_{D,E}}$, we define a variation $\alpha_s$ of $\alpha$
as a differentiable curve $(-\ve, \ve) \to \overline{\calA_{D,E}}$, $s \mapsto \alpha_s$, such that $\alpha_0 = \alpha$.
\begin{definition}
We say that $\alpha \in \overline{\calA_{D,E}}$ is harmonic if it is a critical point of $\hat \scrE$, i.e., for every variation $\alpha_s$ of $\alpha$,
we have $\frac{d}{ds} \hat \scrE(\alpha_s)|_{s=0} =0$. We say that $f$ is harmonic if $\alpha_f$ is harmonic. 

\end{definition}
We have the following result. 
\begin{theorem}\label{th:harmonic}
Let $M$ be a simply connected, compact manifold, and $\nabla$ a connection compatible with the sub-Riemannian measure space $(M,g^*,d\mu)$. Let the map $\sharp = \sharp^{h}_1: \mathfrak{g}^* \to \mathfrak{e}$ correspond to the sub-Riemannian metric
$h$ at the identity. Assume that $\alpha \in \calA_{D,E}$ is harmonic. Then at least one of the following statements holds.
\begin{enumerate}[\rm (a)]
\item \emph{(Abnormal case)} There exists form a $\eta \in \Omega^1(M, \mathfrak{g}^*)$ with $\eta|D$ non-zero, satisfying $\sharp \eta|D = 0$ and
$$\delta_D \eta - \tr_{g} \ad^*(\alpha(\times) )\eta(\times) = 0.$$
with $\delta_D \eta = - \tr_{g}  \nabla_\times\eta(\times)$.
\item \emph{(Normal case)} There exists a form $\lambda \in \Omega^1(M, \mathfrak{g}^*)$ satisfying $\sharp \lambda|D = \alpha|D$ and
$$\delta_D \lambda - \tr_{g} \ad^*(\alpha(\times) )\lambda(\times) = 0.$$
with $\delta_D \lambda = - \tr_{g}  \nabla_\times\lambda(\times)$.
\end{enumerate}
\end{theorem}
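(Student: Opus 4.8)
The plan is to compute the first variation of $\hat\scrE$ on $\overline{\calA_{D,E}}$ using a Lagrange-multiplier argument, exactly as one does when deriving normal and abnormal geodesics from the endpoint map. By Lemma~\ref{lemma:tangent} the tangent space to the unconstrained space $\calA$ at $\alpha$ is $T_\alpha\calA = \{dF + [\alpha,F] : F\in C^\infty(M,\mathfrak g)\}$, and by the definition of $\overline{\calA_{D,E}} = \ker P\cap\overline{\calA}$ (see \eqref{eq:P}), the admissible variations at $\alpha$ are those $\eta = dF+[\alpha,F]$ with $P(\eta)=0$, i.e. $(dF+[\alpha,F])|D \in \mathfrak e$. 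First I would compute $\frac{d}{ds}\hat\scrE(\alpha_s)|_{s=0}$ for a variation with velocity $\eta$: since $\hat\scrE(\alpha) = \frac12\int_M |\alpha|^2_{g^*}\,d\mu$, the derivative is $\int_M \langle \alpha, \eta\rangle_{g^*}\,d\mu$. Using the identification of $\mathfrak e^*$ with (part of) $\mathfrak g^*$ via the metric, write $\alpha|D = \sharp\lambda_0|D$ for some $\lambda_0\in\Omega^1(M,\mathfrak g^*)$ (one natural choice lands in $\Ann(\mathfrak f)$), so that $\langle\alpha,\eta\rangle_{g^*} = \tr_g \langle \alpha(\times), \eta(\times)\rangle_h = \tr_g \lambda_0(\times)(\eta(\times))$ since $\eta(\times)\in\mathfrak e$ on $D$. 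Substituting $\eta = dF + [\alpha,F]$ and integrating by parts with respect to $d\mu$ (here is where $\nabla$ compatible with $(g^*,d\mu)$ and the Corollary after Lemma~\ref{lemma:Basis} enter, to handle $\tr_g\nabla_\times(\,\cdot\,)(\times)$ cleanly and to turn $\tr_g\lambda_0(\times)(dF(\times))$ into $-(\delta_D\lambda_0)(F)$ up to a divergence), one obtains
\begin{equation*}
\frac{d}{ds}\hat\scrE(\alpha_s)\Big|_{s=0} = \int_M \big(\delta_D\lambda_0 - \tr_g \ad^*(\alpha(\times))\lambda_0(\times)\big)(F)\,d\mu,
\end{equation*}
where the $\ad^*$ term comes from pairing $\lambda_0$ against $[\alpha,F]$ and moving the bracket onto $\lambda_0$ by duality.

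Next I would set up the constrained critical point condition. Harmonicity means this integral vanishes for all $F$ such that $(dF+[\alpha,F])|D\in\mathfrak e$, i.e. for all $F$ in the kernel of the linear constraint operator $F\mapsto P(dF+[\alpha,F]) \in L^2\Omega^1(D,\mathfrak g/\mathfrak e)$. The functional $F\mapsto \int_M(\delta_D\lambda_0 - \tr_g\ad^*(\alpha)\lambda_0)(F)\,d\mu$ therefore annihilates that kernel. I would then split into the two cases according to whether $\alpha$ is regular or singular. If $\alpha$ is \emph{singular}, the differential of $P|_\calA$ is not surjective, so its cokernel is nontrivial; this cokernel is realized (after dualizing) by a nonzero $\eta\in\Omega^1(M,\mathfrak g^*)$ with $\eta|D$ nonzero and $\eta|D$ annihilating $\mathfrak e$, i.e. $\sharp\eta|D=0$, and satisfying the formal adjoint equation $\delta_D\eta - \tr_g\ad^*(\alpha(\times))\eta(\times)=0$ — this is precisely statement (a), and it holds independently of harmonicity. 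If $\alpha$ is \emph{regular}, then the constraint operator is surjective (onto a suitable Hilbert space), so the vanishing of the functional on its kernel forces, by the closed-range theorem / Lagrange multiplier principle, that $\delta_D\lambda_0 - \tr_g\ad^*(\alpha)\lambda_0 = \Lambda$ lies in the image of the adjoint of the constraint operator; absorbing that multiplier into $\lambda_0$ by setting $\lambda = \lambda_0 + (\text{correction annihilating } \mathfrak e \text{ on } D)$ — which does not change $\sharp\lambda|D = \alpha|D$ — yields statement (b).

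The main obstacle I expect is the functional-analytic bookkeeping in the regular case: $\overline{\calA_{D,E}}$ lives inside the Hilbert manifold $\overline{\calA}$ (Corollary~\ref{cor:Hilbert}), the constraint map $P$ has values in $L^2\Omega^1(D,\mathfrak g/\mathfrak e)$, and to invoke a Lagrange-multiplier theorem I need the differential of $P|_\calA$ at a regular $\alpha$ to have closed range, plus I need to know the multiplier $\Lambda$ is regular enough (a genuine $\mathfrak g^*$-valued one-form, or at least $L^2$) to be reabsorbed into $\lambda$ and to make sense of $\delta_D\lambda$ distributionally; the hypoellipticity of the relevant operator (it is a sub-Laplacian-type operator on a bracket-generating $D$) should upgrade regularity, but checking that the adjoint of $F\mapsto dF+[\alpha,F]$ acting into the quotient really produces the $\ad^*$ term and the divergence $\delta_D$ with no leftover curvature terms requires care — in particular the integration by parts must be done with the $(g^*,d\mu)$-compatible $\nabla$ and a frame as in Lemma~\ref{lemma:Basis}(b) so that all the $\dv_{d\mu}X_i$ terms vanish at the base point. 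A secondary (but purely algebraic) point is verifying that pairing $\lambda_0$, a priori only defined modulo $\Ann(\mathfrak e)|D$, against $\eta(\times)\in\mathfrak e$ is well-defined, which it is precisely because $\eta|D$ takes values in $\mathfrak e$; this is what lets the answer be phrased intrinsically in terms of $\sharp\lambda|D = \alpha|D$ rather than a choice of complement $\mathfrak f$.
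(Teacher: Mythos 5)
Your proposal is correct and follows essentially the same route as the paper's proof: both compute the formal adjoint $L_\alpha^*F = \delta_D(\cdot) - \tr_g\ad^*(\alpha(\times))(\cdot)(\times)$ of $F\mapsto dF+[\alpha,F]$ by integration by parts with the compatible connection, obtain (a) from a nonzero annihilator of the image of $D_\alpha P$ when $\alpha$ is singular, and obtain (b) in the regular case by showing the multiplier lies in $L_\alpha^*Q$ (with $Q$ the forms sending $D$ into $\Ann(\mathfrak e)$) and absorbing it into $\lambda$. The functional-analytic caveats you flag (closed range, smoothness of the multiplier so that $\overline{L_\alpha^*Q}\cap C^\infty(M,\mathfrak g)=L_\alpha^*Q$) are exactly the points the paper's proof also has to address.
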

Recall that $\ad^*$ denotes the adjoint representation of $\mathfrak{g}$ on $\mathfrak{g}^*$ given by $(\ad^*(A) \beta)(B)=- \beta([A,B])$ for any $A, B \in \mathfrak{g}$, $\beta \in \mathfrak{g}^*$.

\begin{remark}\label{re:normal-abnormal}
We remark the following about the result of Theorem~\ref{th:harmonic}. 
\begin{enumerate}[$\bullet$]
\item Case (a), which we call \emph{abnormal}, occurs when $\alpha = \alpha_f \in \calA_{D,E}$ is singular. It is a property that holds for all singular elements, and it is not related to optimality. The proof of the result in (a) does not use the property that $\alpha$ is a harmonic form. 
\item Case (b), which is called \emph{normal}, occurs when $\alpha = \alpha_f \in \calA_{D,E}$ is both regular and is a critical value of $\hat \scrE$. However, there are also singular forms $\alpha \in \calA_{D,E}$ that are critical values of $\hat \scrE$ and also have a corresponding $\lambda \in \Omega^1(M, \mathfrak{g}^*)$ satisfying the equations as in (b), but such an extremal form $\alpha$ is not called normal. Thus, Cases (a) and (b) are not completely disjoint.
\item We remark also that the results of Theorem~\ref{th:harmonic} only depend on restrictions $\alpha|D$ and $\lambda|D$ of forms to $D$ and do not depend on their extension to the entire tangent bundle. Hence, we could have stated Theorem~\ref{th:harmonic} by using only the restrictions $\alpha|D$ and $\lambda|D$.
\end{enumerate}
Before proceeding to the proof, we observe how the result of Theorem~\ref{th:harmonic} satisfies known examples in literature.
\end{remark}

\begin{example}
We note that if $E = TG$, so that $G$ is a Riemannian Lie group, then there cannot exist any abnormal harmonic maps. Indeed, since $\sharp$ is now is a bijective map, we cannot have that  $\eta|D \neq 0$, while still having that $\sharp \eta|D =0$.

For the normal case, we have $\sharp \lambda = \alpha$ and if $\alpha =\alpha_f$, then
\begin{equation} \label{eqLambdaRiemannian} \delta_D \alpha + \tr_g \ad(\alpha(\times))^{\dagger}\alpha(\times) =0,\end{equation}
where $\ad(\alpha(\times))^{\dagger}$ is the transpose map with respect to left-invariant metric~$h$ on~$G$.
This is just the classical tension field equation for harmonic maps. In order to explain this, we write \eqref{eqLambdaRiemannian} as
\begin{equation}\label{eq:tau}
\tau(f) = \tr_{g} \bnabla_{\times} df(\times) = 0, \qquad \bnabla = \nabla \otimes f^* \nabla^h.
\end{equation}
Here, $\nabla^h$ is the Levi-Civita connection on $G$, which for left-invariant vector fields, can be written as,
$$2\nabla_A^h B = \ad(A)B  - \ad(A)^{\dagger} B - \ad(B)^{\dagger} A.$$
Furthermore, $\bnabla$ is the induced connection on $T^*M \otimes f^* TG$, which can be described by
$$(\bnabla_{X} df)(Y) = \nabla_{df(X)}^h df(Y) - df(\nabla_X Y).$$
Equation~\eqref{eq:tau} coincides with the tension field $\tau(f)$ for maps between the Riemannian manifolds in~\cite{MR164306} or from sub-Riemannian manifolds to Riemannian manifolds in~\cite{MR4236537,MR1433120}. For the special case
$G = \mathbb{R}$, we have that $\tau(f) = \Delta_{g,d\mu} f$.
\end{example}
\begin{example} Consider $M = [0,1]$. Although this is not within the scope of the theorem, as~$M$ is a Riemannian manifold with boundary, the theorem is still valid under the assumption that any variation is constant on $\partial M = \{0, 1\}$. Define $\nabla^\ell$ to be the left-invariant connection on $G$, i.e., the connection such that $\nabla^\ell A = 0$ for any left-invariant vector field $A$. This connection is compatible with the sub-Riemannian structure $(E,h)$. Let $T^\ell$ be the torsion of $\nabla^\ell$, given for left-invariant vector fields by
$$T^\ell(A,B) = -\ad(A) B.$$
We say that the \emph{adjoint connection} to $\nabla^\ell$ is given by $\hat \nabla_A^\ell B = \nabla_A^\ell B - T^\ell(A,B)$. For the special case of $\nabla^\ell$, its adjoint will be the right invariant connection. If $f\colon [0,1] \to G$, then the equation in Theorem~\ref{th:harmonic}~(a) is written as
$$\hat \nabla_{\dot f} \eta =0,\qquad \sharp^h \eta = 0,$$
where $\eta$ is a one-form along $f(t)$. 
The equation in Theorem~\ref{th:harmonic}~(b) becomes
$$\hat \nabla_{\dot f} \lambda =0,\qquad \sharp^h \lambda = \dot f.$$
These are the respective equations for abnormal curves and normal geodesics see \cite{GoGr15,grong2020affine} for details.
\end{example}

\begin{proof}[Proof of Theorem~\ref{th:harmonic}]
Recall that we have defined $L^2$-forms with respect to a taming metric $\bar{g}$ and an inner product on $\mathfrak{g}$.
Assume that $\alpha \in \calA_{D,E}$ is harmonic. Write
\begin{align*}
Q & = \{ \eta \in \Omega^1(M,\mathfrak{g}^*) \, : \eta(D) \subseteq \Ann(\mathfrak{e}) \}, \\
\Lambda_{\alpha} &= \{ \lambda \in \Omega^1(M, \mathfrak{g}^*) \, : \, \sharp \lambda|D=\alpha|D \}.
\end{align*}
Note that $Q$ is a vector space, while $\Lambda_{\alpha}$ is an affine space with $\lambda_1 - \lambda_2 \in Q$ for $\lambda_1, \lambda_2 \in \Lambda_\alpha$.
We first observe the following. Consider the operator $L_\alpha F := dF + [\alpha, F]$. Then for any $\eta \in L^2 \Omega^1(M, \mathfrak{g}^*)$, we note that
\begin{align*}
\int_M \tr_g \eta(\times) dF( \times) d\mu &= \int_M \tr_g \eta(\times) \nabla_{\times} F d\mu = \int_M (\delta_D\eta) F d\mu;
\end{align*}
hence
\begin{align*}
\int_M \tr_g \eta(\times) (L_\alpha F)(\times)\,d\mu & = \int_M \tr_g \eta(\times)( dF(\times) + [\alpha(\times), F] ) \\
& = \int_M (\delta_D \eta - \tr_{g} \ad^*(\alpha(\times) )\eta(\times)) F d\mu  =: \int_M (L_\alpha^* \eta) F d\mu.
\end{align*}

If $\alpha \in \overline{\calA_{D,E}}$ is singular, then there is a non-zero form $\check{\eta} \in L^2 \Omega^1(D, \mathfrak{g}/\mathfrak{e})$ orthogonal to the image of $D_\alpha P(T_\alpha \calA)$ where $P$ is given in~\eqref{eq:P}. Define $\eta = \langle \check{\eta} , D_\alpha P \cdot \rangle_{L^2} \in \bar{Q}$. Then for any element $L_\alpha F$ in $T_\alpha \calA$, $F \in C^\infty(M,\mathfrak{g})$, we have
$$0 =\langle \eta, D_\alpha P L_\alpha F \rangle = \int_M \eta(L_\alpha F)\, d\mu = \int_M (L_\alpha^* \eta) F\, d\mu.
$$
As this holds for any $F \in C^\infty(M, \mathfrak{g})$, the result in (a) follows.

If $\alpha$ is regular, then $\overline{\calA_{D,E}}$ is locally a manifold with $T_\alpha \overline{\calA_{D,E}}$ being the closure of elements $L_\alpha F$ such that $L_\alpha F(D) \subseteq \mathfrak{e}$. In other words, elements in $T_\alpha \overline{\calA_{D,E}}$ are in the closure of elements $L_\alpha F$, $F \in C^\infty(M, \mathfrak{g})$, that are orthogonal to $Q$, which can be written as
$$T_{\alpha} \overline{\calA_{D,E}} = \overline{\left\{ L_\alpha F \, :\, \langle F, \phi \rangle_{L^2} =0 \text{ for any } \phi \in L_\alpha^* Q \right\}}.$$
Let $F$ be an arbitrary such element in $C^\infty(M,\mathfrak{g})$ that is orthogonal to $L_\alpha^* Q$. For such a tangent vector in $T_{\alpha} \overline{\calA_{D,E}}$, let $\alpha_s$ be the corresponding variation with $\alpha_0 = \alpha$ and $\frac{d}{ds} \alpha_s |_{s=0} = L_\alpha F$.
We observe that for any smooth $\tilde \lambda \in \Lambda_\alpha$,
\begin{align*}
\frac{d}{ds} \hat \scrE(\alpha_s) |_{s=0} &
= \int_M \tr_g \langle \alpha(\times) , dF(\times) + [\alpha(\times), F] \rangle d\mu = \int_M (L_\alpha^* \tilde \lambda) F d\mu. \end{align*} 
If this vanishes for all such variations, then $L_\alpha^* \tilde \lambda \in ( C^\infty(M,\mathfrak{g}) \cap (L_\alpha^* Q)^\perp)^\perp$. We remark that since the elements of $L^2(M,\mathfrak{g})$ can be considered as equivalence classes of sequences $(\phi_n)_{n=1}^\infty$ of smooth functions convergent in $L^2$, we have
$$( C^\infty(M,\mathfrak{g}) \cap (L_\alpha^* Q)^\perp)^\perp = \overline{L_\alpha^* Q}, \qquad \text{ and } \qquad  \overline{L_\alpha^* Q} \cap C^\infty(M,\mathfrak{g}) = L_\alpha^* Q.$$
Furthermore, since $\tilde \lambda$ is smooth, then so is $L_\alpha^* \tilde  \lambda$; hence we can write $L_\alpha^* \tilde \lambda = L_\alpha^* \eta \in L_\alpha^* Q$. By defining $\lambda = \tilde \lambda - \eta \in \Lambda_\alpha$, we find that $\lambda$ satisfies $L_\alpha^* \lambda = 0$.
\end{proof}

\begin{remark}\label{re:NSC}
The results in Theorem~\ref{th:harmonic} can be generalized to a non simply connected manifold. If $M$ is not simply connected, we consider its universal cover $\Pi\colon \tilde M \to M$. We note that $\tilde M$ might not be compact, but, as mentioned in our introduction, we can consider compact subdomains. We can then lift functions from $M$ to $\tilde M$ as $f \mapsto f \circ \Pi$. By using a partition of unity, we decompose an integral over $\tilde M$ or one of its compact subdomains as integrals over open sets that are mapped bijectively to an open set in $M$. It leads to the conclusion that if $f$ is a harmonic map, then so is $f \circ \Pi$. Looking at the equations in Theorem~\ref{th:harmonic}, we see that they are all local and can hence be projected to $M$.
\end{remark}

\subsection{Harmonic maps into the Heisenberg group}
We consider the case of harmonic maps $f\colon M \to H^{n}$. Let $(a,b,c)$ be the coordinates on $H^n$ as described in Example~\ref{ex:Hn}. We then have the following corollary.
\begin{proposition}\label{prop:Heisenberg}
Let $f\colon M \to H^n$ be a horizontal map from $(M,D,g,d\mu)$ into the Heisenberg group $(H^n, E,h)$ with its standard sub-Riemannian structure. Write
$$(u,v,w) = (a,b,c) \circ f, \qquad \zeta= u +iv.$$
Then $f$ is a normal harmonic if and only if for some horizontal vector field $Y \in \Gamma(M)$ satisfying $\dv_{d\mu} Y =0$ we have
$$(\Delta_{g,d\mu} - i Y)\zeta = 0.$$
\end{proposition}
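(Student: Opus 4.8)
The strategy is to specialize Theorem~\ref{th:harmonic}(b) to $G = H^n$ and translate the abstract equation $L_\alpha^* \lambda = 0$, i.e. $\delta_D \lambda - \tr_g \ad^*(\alpha(\times))\lambda(\times) = 0$, into the stated scalar PDE for $\zeta$. First I would set up coordinates on the Lie algebra $\mathfrak{h}_n$ with basis $A_1,\dots,A_n,B_1,\dots,B_n,C$ as in Example~\ref{ex:Hn}, with dual basis $\alpha^j = da_j|_1$, $\beta^j = db_j|_1$, $\gamma = \theta|_1$ on $\mathfrak{g}^*$. Here $\mathfrak{e} = \spn\{A_j,B_j\}$ and $\sharp^h_1 \colon \mathfrak{g}^* \to \mathfrak{e}$ sends $\alpha^j \mapsto A_j$, $\beta^j \mapsto B_j$, $\gamma \mapsto 0$. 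I would use the left-invariant connection $\nabla^\ell$ (which is compatible with $(E,h)$ — but more importantly I need a connection on $M$ compatible with $(g^*,d\mu)$, which exists by the Proposition of \cite{GrTh16}), so that $\delta_D$ is expressed via $-\tr_g \nabla_\times$.

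Next I would write the Darboux derivative of $f = (u,v,w)$ explicitly. Since $\alpha_f = f^*\omega$ and $\omega = \sum A_j\, da_j + \sum B_j\, db_j + C\,\theta$ pulled back, horizontality of $f$ is exactly the condition $\alpha_f(D) \subseteq \mathfrak{e}$, which is automatic from the hypothesis; in components $\alpha_f|D = \sum (du_j) A_j + \sum (dv_j) B_j$ restricted to $D$, with the $C$-component $f^*\theta = dw + \frac12\sum(v_j\,du_j - u_j\,dv_j)$ vanishing on $D$ by horizontality. Then I would take $\lambda \in \Lambda_\alpha$, writing $\lambda = \sum \lambda^{A}_j \alpha^j + \sum \lambda^B_j \beta^j + \lambda^C \gamma$; the constraint $\sharp\lambda|D = \alpha|D$ forces $\lambda^A_j|D = du_j|D$ and $\lambda^B_j|D = dv_j|D$, while $\lambda^C$ (a function, the coefficient of $\gamma$) is free — this free function will become the potential for the vector field $Y$. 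The key computation is to expand $\ad^*(\alpha(\times))\lambda(\times)$ using the bracket relations \eqref{basisHeis}: the only nonzero brackets are $[A_i,B_j] = \delta_{ij}C$, so $\ad^*(A_i)$ and $\ad^*(B_j)$ move only the $\gamma$-component, sending $\alpha^i \leftrightarrow$ contributions to $\gamma$, etc. Working this out, the $A_j$- and $B_j$-components of $L_\alpha^*\lambda = 0$ become $\Delta_{g,d\mu} u_j + (\text{first-order term involving } \lambda^C) = 0$ and similarly for $v_j$, where the first-order term is $\tr_g \lambda^C(\times)\,\alpha(\times)$ paired appropriately — this is where the vector field $Y$ enters: I would define $Y$ so that $\langle Y, \cdot\rangle_g$ corresponds to the one-form $\lambda^C|D$, equivalently $Y = \sharp^g(\lambda^C|D)$. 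The $\gamma$-component of the equation, $\delta_D(\lambda^C) - \tr_g(\ldots) = 0$, should reduce precisely to $\dv_{d\mu} Y = 0$ after using the Corollary relating $\dv_{d\mu} Y$ to $\tr_g\langle\nabla_\times Y,\times\rangle_g$ and the identity $\delta_D = -\tr_g\nabla_\times$; the cross terms from $\ad^*$ in the $\gamma$-component vanish because $[A_i,A_j] = [B_i,B_j] = 0$ and $[A_i,B_i]$ lands in $\spn C$ which is annihilated again. Finally, combining the $u_j$ and $v_j$ equations into the complex combination $\zeta = u + iv$ and recognizing the first-order terms as $-iY\zeta$ (the factor of $i$ coming from the symplectic pairing $du_j \wedge dv_j$ structure in the bracket $[A_j,B_j]=C$) yields $(\Delta_{g,d\mu} - iY)\zeta = 0$. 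The converse direction just runs this backward: given such $Y$ with $\dv_{d\mu}Y = 0$, set $\lambda^C|D = \langle Y,\cdot\rangle_g$ extended arbitrarily off $D$, and $\lambda^A_j, \lambda^B_j$ as above, check $L_\alpha^*\lambda = 0$, and invoke Theorem~\ref{th:harmonic}(b) together with the observation (from Example~\ref{ex:sb} and Proposition~\ref{prop:sb}) that under mild injectivity the map is regular so it is genuinely normal, or at least that the equations characterize critical points.

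The main obstacle I expect is bookkeeping the index gymnastics in $\ad^*(\alpha(\times))\lambda(\times)$ and correctly tracking how the $C$-component of $\lambda$ couples back into the $\mathfrak{e}$-components — in particular getting the sign and the factor $i$ right when passing to $\zeta$, and making sure the ``free'' choice of $\lambda^C$ off $D$ genuinely does not affect the equations (as noted in Remark~\ref{re:normal-abnormal}, only restrictions to $D$ matter, so this should be harmless but needs to be stated carefully). A secondary subtlety is that $\lambda^C|D$ being an arbitrary horizontal one-form is equivalent to $Y$ being an arbitrary horizontal vector field via $\sharp^g$, and the divergence-free condition on $Y$ is exactly the $\gamma$-component equation — verifying this equivalence cleanly is the crux of why the statement takes the compact form it does.
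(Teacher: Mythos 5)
Your proposal is correct and follows essentially the same route as the paper: specialize Theorem~\ref{th:harmonic}(b), expand $\alpha|D$ and $\lambda|D$ in the Heisenberg basis so that the constraint $\sharp\lambda|D=\alpha|D$ pins down the $\mathfrak e^*$-components and leaves the $\theta$-coefficient $\lambda_0$ free, set $Y=\sharp^g\lambda_0$, and read off $\dv_{d\mu}Y=0$ from the $\theta$-component and $(\Delta_{g,d\mu}-iY)\zeta=0$ from the complexified $da_j,db_j$-components. (Only note: $\lambda_0$ is a one-form on $M$, not a function, as your own later use of $\sharp^g(\lambda^C|D)$ already reflects.)
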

We note that the operator $\Delta_{g,d\mu} - iY$ is hypoelliptic by~\cite{BN85}. However, recall that we are \emph{also} assuming that $f$ is horizontal, meaning that 
$$\textstyle \left(dw + \frac{1}{2} \sum_{j=1}^n (v_j du_j - u_j dv_j)\right) |D=0.$$
Note that if $\rank D \leq 2n$, then all harmonic maps are normal by Example~\ref{ex:sb}. and Proposition~\ref{prop:sb}.
\begin{proof}
Since $f$ is horizontal, then
$$\alpha|D = \alpha_f|D = \sum_{j=1}^n du_j|D \otimes A_j + \sum_{j=1}^n dv_j|D \otimes B_j.$$
From the requirement that $\sharp \lambda|_D = \alpha|D$, we have that
$$\lambda|D = \sum_{j=1}^n du_j|D \otimes da_j + \sum_{j=1}^n dv_j|D \otimes db_j + \lambda_0|D \otimes \theta,$$
where $\lambda_0$ is a one-form on $M$. Write $Y = \sharp^g \lambda_0$ as a vector field. The harmonic equation is given by
\begin{align*}
& 0 =\delta_D \lambda - \tr_{g} \ad^*(\alpha(\times) )\lambda(\times) \\
& = - \sum_{j=1}^n \tr_g (\nabla_{\times} du_j)(\times) \otimes da_j - \sum_{j=1}^n \tr_g (\nabla_{\times} dv_j)(\times) \otimes db_j + (\nabla_{\times} \lambda_0)(\times) \otimes \theta \\
& \qquad + \tr_g \lambda_0(\times)  \left(du_j(\times) \otimes db_j  \right)  - \tr_g \lambda_0(\times)  \left(dv_j(\times) \otimes da_j  \right) \\
& = - \sum_{j=1}^n (\Delta_D u_j + Yv_j) \otimes da_j - \sum_{j=1}^n (\Delta_D v_j - Y u_j) \otimes db_j + (\dv_\mu Y) \otimes \theta.
\end{align*}
It follows that $\dv_\mu Y = 0$ and 
$$\Delta_{g,\mu} \zeta = (\Delta_{g,\mu} u_j + i \Delta_{g,\mu} v_j )_j = (- Yv_j + i Yu_j)_j = i Y\zeta,
$$
completing the proof.
\end{proof}

\begin{example}
If we choose $M = [0,1]$ in Proposition~\ref{prop:Heisenberg}, allowing a manifold with boundary, we obtain the normal sub-Riemannian geodesics on the Heisenberg group. More precisely, the horizontality requirement for $f$ can be written as
$$\dot w = - \frac{1}{2} \sum_{j=1}^n (v_j \dot u_j - u_j \dot v_j),$$
while $\zeta = u + iv$ now has to satisfy
$$\ddot \zeta + i y\dot \zeta =0, \qquad \text{for some constant $y$}.$$
In other words $\dot \zeta = e^{iyt} \dot \zeta(0)$, which is exactly the equation for normal geodesics on the Heisenberg group with solutions being the horizontal lifts of circular arcs on the $(u,v)$-plain.
\end{example}

\appendix

\section{Forms with values in the Lie algebra} \label{sec:Forms}
We recall here the definition of brackets of forms with values in a Lie algebra $\mathfrak{g}$. Let $\alpha \in \Omega^k(M, \mathfrak{g})$ be a
$\mathfrak{g}$ valued $k$-form, that is a section of the vector bundle $\wedge^k T^*M \otimes \mathfrak{g}$.  Note that all such elements can be written as a finite sum of elements $\check{\alpha} \otimes A$ where $\check{\alpha} \in \Omega^k(M)$ is a real valued $k$-form, and $A \in \mathfrak{g}$. For $\check \alpha \in \Omega^k(M)$, $\check \beta \in \Omega^l(M)$ and $A,B \in \mathfrak{g}$, we define
$$\left[ \check{\alpha} \otimes A, \check{\beta} \otimes B\right] =(\check{\alpha} \wedge \check{\beta}) \otimes [A,B].$$
We can extend this definition by linearity to arbitrary forms $\alpha \in \Omega^k(M, \mathfrak{g})$ and $\beta \in \Omega^l(M, \mathfrak{g})$, to obtain a form $[\alpha,\beta] \in \Omega^{k+l}(M, \mathfrak{g})$. We note that $[\alpha, \beta] = (-1)^{kl+1} [\beta, \alpha]$. We look at the particular case when $k =l=1$. For $\alpha, \beta \in \Omega^1(M,\mathfrak{g})$ and a basis $A_1, \dots, A_n$ of $\mathfrak{g}$, we write $\alpha = \sum_{j=1}^n \check \alpha_j \otimes A_j$ and $\beta = \sum_{j=1}^n \check \beta_j \otimes A_j$. We then observe that for any $v , w \in TM$,
\begin{align*}
& {[\alpha,\beta]}(v,w) = [\beta,\alpha](v,w)  = \sum_{i,j=1}^n (\check \alpha_i(v) \check \beta_j(w) - \check \beta_j(v) \check \alpha_i(w)) [A_i, A_j] \\
& = [\alpha(v), \beta(w)] - [\alpha(w), \beta(v)] = [\alpha(v), \beta(w)] + [\beta(v), \alpha(w)].
\end{align*}
In particular, $[\alpha,\alpha](v,w) =2 [\alpha(v),\alpha(w)]$. If $\alpha$ is a one-form and $F$ is a zero-form, i.e., a function, then $[\alpha,F](v) = [\alpha(v), F(x)]$ for every $v \in T_xM$.

\

\paragraph{\bf Acknowledgements} We thank Pierre Pansu and Mauricio Godoy Molina for helpful discussions.

\bibliographystyle{habbrv}
\bibliography{Bibliography,BibliographyHarmonic}

\end{document}